\newtheorem{thm}{Theorem}[section]
\newtheorem{defn}[thm]{Definition}
\newtheorem{lem}[thm]{Lemma}
\newtheorem{prop}[thm]{Proposition}
\newtheorem*{rem}{Remark}
\newcommand{\ml}[2]{\begin{tabular}{@{} >{$}#1<{$} @{}} #2 \end{tabular}}
\newcommand{\thickhline}{%
    \noalign {\ifnum 0=`}\fi \hrule height 1pt
    \futurelet \reserved@a \@xhline
}
\newcolumntype{"}{@{\hskip\tabcolsep\vrule width 1.25pt\hskip\tabcolsep}}
\title{Majorana algebras generated by a $2A$ algebra and one further axis}
\author[1]{Madeleine L.~Whybrow}
\affil[1]{Department of Mathematics, Imperial College London, SW7 2AZ madeleine.whybrow10@imperial.ac.uk}
\date{}
\begin{document}

\maketitle

\begin{abstract}
We consider Majorana algebras generated by three Majorana axes $a_0$, $a_1$ and $a_2$ such that $a_0$ and $a_1$ generate a dihedral algebra of type $2A$. We show that such an algebra must occur as a Majorana representation of one of $27$ groups. These $27$ groups coincide with the subgroups of the Monster that are generated by three $2A$-involutions $a$, $b$ and $c$ such that $ab$ is also a $2A$-involution, which were classified by S.~P.~Norton in 1985. Our work relies on that of S.~Decelle and consists of showing that certain groups do not admit Majorana representations.  
\end{abstract}

\section{Introduction}

Majorana theory was introduced by Ivanov in \cite{Ivanov09} as an axiomatic framework in which to study objects related to the Monster simple group $\mathbb{M}$ and the Griess algebra $V_{\mathbb{M}}$. It is well known that the Monster group is a $6$-transposition group; it is generated by the $2A$ class of involutions and for all $t_0,t_1 \in 2A$, $o(t_0t_1) \leq 6$. Moreover, $t_0t_1$ must lie in one of the nine conjugacy classes $1A$, $2A$, $2B$, $3A$, $3C$, $4A$, $4B$, $5A$, $6A$. 

Conway \cite{Conway84} showed that there exists a bijection $\psi$ from the $2A$-involutions in $\mathbb{M}$ to idempotents in the Griess algebra, known as $2A$-axes. Moreover, the Griess algebra is generated by the $2A$-axes and for all $t_0,t_1 \in 2A$, the dihedral subalgebra $\langle \langle \psi(t_0), \psi(t_1) \rangle \rangle$ has one of nine possible isomorphism types, depending on the $\mathbb{M}$-conjugacy class of $t_0t_1$. 

The moonshine module $V^{\natural} = \bigoplus_{n = 0}^{\infty} V_n$ is a vertex operator algebra (VOA). It was constructed by Frenkel, Lepowsky and Meurman in \cite{FLM88} and was later used by Borcherds in his proof of Conway and Norton's Monstrous Moonshine conjectures \cite{Borcherds92}. The automorphism group of $V^{\natural}$ is the Monster group and the weight 2 subspace $V_2^{\natural}$ has the structure of a commutative algebra that coincides with the Griess algebra. 

In general, if we take $V = \bigoplus_{n = 0}^{\infty} V_n$ to be a VOA such that $V_0 = \mathbb{R}\mathbf{1}$ and $V_1 = 0$ then $V_2$ is known as a \emph{generalised Griess algebra} and has the structure of a commutative, non-associative real algebra. In particular, Miyamoto \cite{Miyamoto96} showed that there exist involutions $\tau_a \in \mathrm{Aut}(V)$ that are in bijection with certain vectors $a \in V_2$ known as \emph{Ising vectors}. Moreover, when $V = V^{\natural}$, the vectors $\frac{1}{2}a$ are the $2A$-axes of the Griess algebra and the $\tau_a$ are the $2A$-involutions of $\mathbb{M}$. Sakuma then proved the following result, now known as \emph{Sakuma's theorem}.
\begin{thm}[\cite{Sakuma07}]
\label{thm:sakuma}
If $V_2$ is a generalised Griess algebra and $a_1, a_2 \in V_2$ are Ising vectors then the subalgebra $\langle \langle a_1, a_2 \rangle \rangle$ is isomorphic to one of the nine dihedral subalgebras of the Griess algebra.
\end{thm}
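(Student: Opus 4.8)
The plan is to realise $U := \langle\langle a_1, a_2 \rangle\rangle$ as the linear span of a $\langle \tau_{a_1}, \tau_{a_2}\rangle$-orbit of Ising vectors and to force that orbit to be finite — indeed to force $\tau_{a_1}\tau_{a_2}$ to have order at most $6$ on $U$ — by pitting the Ising fusion rules against positivity of the invariant bilinear form, after which a finite case analysis matches $U$ against one of the nine dihedral subalgebras. To set up, write $a = a_1$, $b = a_2$; each generates a sub-VOA isomorphic to $L(\tfrac12,0)$, so after rescaling to idempotents the adjoint maps $\mathrm{ad}_a$ and $\mathrm{ad}_b$ on $V_2$ are semisimple with spectrum in $\{1, 0, \tfrac14, \tfrac1{32}\}$, and products of eigenvectors obey the fusion rules of $L(\tfrac12,0)$. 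Miyamoto's construction attaches to $a$ and $b$ algebra automorphisms $\tau_a, \tau_b$ acting as $-1$ on the $\tfrac1{32}$-eigenspace and $+1$ elsewhere. Put $\rho = \tau_a \tau_b$, $D = \langle \tau_a, \tau_b\rangle$, and define a $\mathbb{Z}$-indexed family of Ising vectors by $a_{2k} = \rho^k(a)$, $a_{2k+1} = \rho^k(b)$ (equivalently $a_{i+1} = \tau_{a_i}(a_{i-1})$), so that $\tau_{a_i}$ reflects the family about index $i$. Using the fusion rules to reduce iterated products, one shows $U$ is spanned by the $a_i$ together with a small, explicit set of auxiliary vectors $s_j$ built from the products $a_i a_j$, and the whole configuration is $D$-invariant.

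Next I would extract recursions. Decompose each $a_i$ into $\mathrm{ad}_a$-eigencomponents. Because $\tau_a(a_i) = a_{-i}$, applying $\tau_a$ annihilates exactly the $\tfrac1{32}$-component, giving linear relations among consecutive $a_i$; multiplying these by $a$ and re-expanding via the fusion rules (the ``resurrection'' trick), together with the known structure constants inside a single copy of $L(\tfrac12,0)$, produces further relations. Feeding all of this into the associative, $D$-invariant Frobenius form yields a homogeneous linear recurrence for the sequence $\mu_n := \langle a_0, a_n\rangle$ — which by invariance depends only on the index difference — and companion recurrences for the inner products involving the $s_j$.

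The decisive input is that the Frobenius form is positive (semi-)definite on $U$. A linear recurrence whose solutions must keep every Gram matrix of the $a_i$ positive semi-definite cannot sustain a characteristic root off the unit circle nor a genuine oscillation on it; a careful analysis shows such a recurrence can only be periodic with small period, forcing the orbit $\{a_i\}$ to be finite and $n := o(\rho|_U)$ to lie in $\{1,\dots,6\}$. For each such $n$ the system for the remaining structure constants is now finite and is solved directly; at $n = 2, 3, 4$ the fusion rules leave exactly one discrete degree of freedom (the vanishing or not of $a_0 a_1$, respectively the presence or absence of an extra idempotent in $U$), yielding the pairs $2A/2B$, $3A/3C$, $4A/4B$, while $n = 1, 5, 6$ give $1A, 5A, 6A$ uniquely. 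Comparing the resulting multiplication tables with the dihedral subalgebras of the Griess algebra completes the proof.

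The main obstacle is the finiteness step: a priori $D$ is the infinite dihedral group, and ruling this out means squeezing the fusion-rule recursions against positive-definiteness hard enough to bound $o(\rho|_U)$ by $6$. Once that is in hand, the $n = 6$ case (and, to a lesser extent, $n = 4$) is the most laborious, since the auxiliary vectors $s_j$ proliferate and closing the algebra requires carrying out the linear algebra explicitly.
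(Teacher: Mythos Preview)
The paper does not contain its own proof of this statement: Theorem~\ref{thm:sakuma} is quoted from \cite{Sakuma07} as background, with no argument supplied. So there is nothing in the paper to compare your proposal against.

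That said, your outline is a faithful sketch of the strategy actually used in \cite{Sakuma07} (and, in the axiomatic Majorana setting, in \cite{IPSS10}): build the $D$-orbit $\{a_i\}$, use the Ising/Majorana fusion rules plus the resurrection principle to obtain linear recurrences among the $a_i$ and the inner products $\langle a_0,a_n\rangle$, invoke positivity of the form (in the Majorana version, the Norton inequality (M2)) to force the recurrence to be periodic with period at most $6$, and then do a finite case analysis. The place where your sketch is softest is exactly where the real work lies: the passage from ``the Gram matrices must stay positive semidefinite'' to ``$o(\rho|_U)\le 6$'' is not a generic fact about positive-definite sequences satisfying a linear recurrence, but depends on the specific recurrence produced by the fusion rules together with explicit eigenvector computations; in practice one writes down concrete $0$- and $\tfrac14$-eigenvectors for $\mathrm{ad}_{a_0}$ and uses (M1) and (M2) to bound $\langle a_0,a_1\rangle$ and the remaining structure constants directly. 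If you were to flesh this out, that is the step to make precise.
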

This was a remarkable result that reproved the classification of the dihedral algebras of the Griess algebra, but in the more general setting of VOAs. Inspired by this result, Ivanov \cite{Ivanov09} introduced Majorana theory as an axiomatisation of certain properties of the $2A$-axes of the Griess algebra. The objects at the centre of the theory are \emph{Majorana algebras} which are generated by idempotents known as \emph{Majorana axes}. Majorana algebras are studied both in their own right and as \emph{Majorana representations} of certain groups. 

Given a group $G$, it is possible that it does not admit any Majorana representations. However, if $G$ is isomorphic to a subgroup of $\mathbb{M}$ that is generated by $2A$-involutions then it must admit at least one Majorana representation, which will be isomorphic to the group's corresponding subalgebra in the Griess algebra. 

It was shown in \cite{IPSS10} that a Majorana algebra generated by two Majorana axes must be isomorphic to a dihedral subalgebra of the Griess algebra (see Theorem \ref{thm:IPSS10}). Inspired by this result, we consider the question of classifying Majorana algebras generated by three Majorana axes $a_0$, $a_1$ and $a_2$ such that $a_0$ and $a_1$ generate a dihedral algebra of type $2A$. 

This problem was first posed at the end of Section 9.2 of \cite{Ivanov09}. In this paper we show that such an algebra must occur as a Majorana representation of one of $27$ groups, each of which is isomorphic to a subgroup of the Monster.

\section{Majorana theory}
Let $V$ be a real vector space equipped with an inner product $(\, , \, )$ and a bilinear, commutative, non-associative algebra product $\cdot$. Suppose that $(V,(\, , \, ), \cdot)$ obeys the following axioms:
\begin{description}
\item[M1] $( \, , \, )$  associates with $\cdot$ in the sense that 
\[
(u , v \cdot w ) = (u \cdot v, w ) 
\]
for all $u,v,w \in V$;
\item[M2] the Norton Inequality holds so that  
\[
(u \cdot u, v \cdot v) \geq (u \cdot v, u \cdot v)
\]
for all $u,v \in V$.
\end{description} 
Moreover, let $A$ be a subset of $V \backslash \{0\}$ and suppose that for every $a \in A$ the following conditions ((M3) to (M7)) hold: 
\begin{description}
\item[M3] $(a,a) = 1$ and $a \cdot a = a$, so that the elements of $A$ are idempotents of length 1;
\item[M4] $V = V_1^{(a)} \oplus V_0^{(a)} \oplus V_{\frac{1}{2^2}}^{(a)} \oplus V_{\frac{1}{2^5}}^{(a)}$ where $V_{\mu}^{(a)} = \{ v \mid v \in V, \, a \cdot v = \mu v\}$ is the set of $\mu$-eigenvectors of the adjoint action of $a$ on $V$;
\item[M5] $V_1^{(a)} = \{ \lambda a \mid \lambda \in \mathbb{R} \}$;
\item[M6] the linear transformation $\tau(a)$ of $V$ defined via 
\[
\tau(a): u \mapsto (-1)^{2^5 \mu}u
\]
for $u \in V_{\mu}^{(a)}$ with $\mu = 1,0,\frac{1}{2^2}, \frac{1}{2^5}$, preserves the algebra product (i.e. $u^{\tau(a)} \cdot v^{\tau(a)} = (u \cdot v)^{\tau(a)}$ for all $u, v \in V$);
\item[M7] if $V_+^{(a)}$ is the centraliser of $\tau(a)$ in $V$, so that $V_+^{(a)} = V_1^{(a)} \oplus V_0^{(a)} \oplus V_{\frac{1}{2^2}}^{(a)}$, then the linear transformation $\sigma(a)$ of $V_+^{(a)}$ defined via 
\[
\sigma(a) : u \mapsto (-1)^{2^2 \mu}u
\]  
for $u \in V_{\mu}^{(a)}$ with $\mu = 1,0,\frac{1}{2^2}$, preserves the restriction of the algebra product to $V_+^{(a)}$ (i.e. $u^{\sigma(a)} \cdot v^{\sigma(a)} = (u \cdot v)^{\sigma(a)}$ for all $u,v \in V_+^{(a)})$. 
\end{description}
\begin{defn}
\label{defn:majorana}
The elements of $A$ are called \emph{Majorana axes} while the automorphisms $\tau(a)$ are called \emph{Majorana involutions}. A real commutative non-associative algebra $(V, \cdot, (\, , \, ))$ is called a \emph{Majorana algebra} if it satisfies axioms (M1) and (M2) and is generated by a set of Majorana axes.
\end{defn}
\begin{defn}
If $V$ is a Majorana algebra generated by Majorana axes $A$ then for a subset $B \subseteq A$, we take $\langle \langle B \rangle \rangle$ to be the smallest subalgebra $U$ of $V$ that contains the Majorana axes $B$. 
\end{defn}
The axioms $(M1)$ to $(M7)$ imply that the eigenspaces $V_{\mu}^{(a)}$ of (the adjoint action of) $a$ satisfy the fusion rules in Table \ref{tab:fusion} as described explicitly below. 
\begin{lem}
\label{lem:fusion}
For a fixed Majorana axis $a$, if $u \in V_{\mu}^{(a)}$, $v \in V_{\nu}^{(a)}$ then the product $u \cdot v$ lies in the sum of eigenspaces with corresponding eigenvalues given by the \mbox{$(\mu,\nu)$-th} entry of Table \ref{tab:fusion}.
\end{lem}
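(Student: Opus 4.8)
\emph{Proof proposal.} The plan is to read the table off the axioms in three layers of increasing delicacy. First I would record the standard orthogonality of distinct eigenspaces: if $a\cdot u=\mu u$ and $a\cdot w=\nu w$ with $\mu\neq\nu$, then by (M1) $\mu(u,w)=(a\cdot u,w)=(u,a\cdot w)=\nu(u,w)$, forcing $(u,w)=0$. This is needed only at the very end. Second, the entries in the row and column indexed by $1$ are immediate from (M5): any element of $V_1^{(a)}$ has the form $\lambda a$, and for $v\in V_\nu^{(a)}$ we get $(\lambda a)\cdot v=\lambda\nu v\in V_\nu^{(a)}$; in particular the product of a $1$-eigenvector with a $0$-eigenvector vanishes.

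The bulk of the remaining entries come from viewing (M6) and (M7) as $\mathbb{Z}/2$-gradings. Axiom (M6) says $\tau(a)$ is an algebra automorphism with $(+1)$-eigenspace $V_+^{(a)}=V_1^{(a)}\oplus V_0^{(a)}\oplus V_{\frac{1}{2^2}}^{(a)}$ and $(-1)$-eigenspace $V_{\frac{1}{2^5}}^{(a)}$, so the product respects this grading: $V_{\frac{1}{2^5}}^{(a)}\cdot V_{\frac{1}{2^5}}^{(a)}\subseteq V_+^{(a)}$ (in particular $V_+^{(a)}$ is a subalgebra), and $V_\mu^{(a)}\cdot V_{\frac{1}{2^5}}^{(a)}\subseteq V_{\frac{1}{2^5}}^{(a)}$ for $\mu\in\{0,\tfrac{1}{2^2}\}$. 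Likewise (M7) says $\sigma(a)$ is an automorphism of $V_+^{(a)}$ with $(+1)$-eigenspace $V_1^{(a)}\oplus V_0^{(a)}$ and $(-1)$-eigenspace $V_{\frac{1}{2^2}}^{(a)}$, so inside $V_+^{(a)}$ one gets $V_{\frac{1}{2^2}}^{(a)}\cdot V_{\frac{1}{2^2}}^{(a)}\subseteq V_1^{(a)}\oplus V_0^{(a)}$, $V_0^{(a)}\cdot V_{\frac{1}{2^2}}^{(a)}\subseteq V_{\frac{1}{2^2}}^{(a)}$, and $V_0^{(a)}\cdot V_0^{(a)}\subseteq V_1^{(a)}\oplus V_0^{(a)}$. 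All of these containments are exactly the corresponding entries of Table~\ref{tab:fusion}, with one exception.

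The only entry not yet sharp is the $(0,0)$ one, where the grading argument leaves a spurious $V_1^{(a)}$-summand that the table forbids. To remove it, take $v\in V_0^{(a)}$ and write $v\cdot v=\alpha a+w$ with $w\in V_0^{(a)}$ (using the previous step and (M5)); pairing with $a$ and using (M1), (M3) and the orthogonality of distinct eigenspaces gives $\alpha=(a,\,v\cdot v)=(a\cdot v,\,v)=0$, so $v\cdot v\in V_0^{(a)}$. Since $V_0^{(a)}$ is a subspace, polarising $v\mapsto v+v'$ upgrades this to $V_0^{(a)}\cdot V_0^{(a)}\subseteq V_0^{(a)}$ and completes the table.

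I expect every step except the last to be pure bookkeeping with the two $\mathbb{Z}/2$-gradings furnished by $\tau(a)$ and $\sigma(a)$; the one place that genuinely uses the metric structure, rather than just the automorphisms, is killing the $V_1^{(a)}$-component of $V_0^{(a)}\cdot V_0^{(a)}$ via (M1). So that is the part I would be most careful about, though it is short.
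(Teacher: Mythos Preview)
Your argument is correct and follows essentially the same route as the paper's: handle the $1$-row/column via (M5), use the $\mathbb{Z}/2$-gradings from $\tau(a)$ and $\sigma(a)$ (axioms (M6)--(M7)) for the remaining entries, and then kill the $V_1^{(a)}$-component in the $(0,0)$ case with (M1). The one superfluous step is the polarisation at the end: the same computation $(a,u\cdot v)=(a\cdot u,v)=0$ works directly for arbitrary $u,v\in V_0^{(a)}$, so there is no need to pass through squares first.
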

\begin{proof}
As in the hypothesis, let $u \in V_{\mu}^{(a)}$, $v \in V_{\nu}^{(a)}$. If $1 \in \{\mu,\nu\}$ then the result follows from axiom (M4). If either zero or two of $\{\mu, \nu\}$ are equal to $\frac{1}{2^5}$ then $\tau(a)$ preserves $u \cdot v$ which must then lie in $V^{(a)}_1 \oplus V^{(a)}_0 \oplus V^{(a)}_{\frac{1}{2^2}}$. However, if exactly one of $\mu$ and $\nu$ is equal to $\frac{1}{2^5}$, then $\tau(a)$ inverts $u \cdot v$ and so $u \cdot v \in V^{(a)}_{\frac{1}{2^5}}$. 

A similar analysis using the action of $\sigma(a)$ shows that if $\{\mu, \nu\} = \{0, \frac{1}{2^2}\}$ then $u \cdot v \in V^{(a)}_{\frac{1}{2^2}}$  and if $\mu = \nu$ is equal to $0$ or $\frac{1}{2^2}$ then $u \cdot v \in V^{(a)}_1 \oplus V^{(a)}_0$. Finally, axioms (M1) and (M4) show that if $\mu = \nu = 0$ then the projection of $u \cdot v$ on to $V^{(a)}_1$ is zero. 
\end{proof}
\begin{table}
\centering
\begin{tabular}{>{$}c<{$}|>{$}c<{$}>{$}c<{$}>{$}c<{$}>{$}c<{$}} 
  & 1 & 0 & \frac{1}{2^2} & \frac{1}{2^5} \\ \hline
1 & 1 & \emptyset & \frac{1}{2^2} & \frac{1}{2^5} \\
0 & \emptyset & 0 & \frac{1}{2^2} & \frac{1}{2^5} \\
\frac{1}{2^2} & \frac{1}{2^2} & \frac{1}{2^2} & 1,0 & \frac{1}{2^5} \\
\frac{1}{2^5} & \frac{1}{2^5} & \frac{1}{2^5} & \frac{1}{2^5} & 1,0,\frac{1}{2^2} \\
\end{tabular}
\caption{The fusion rules}
\label{tab:fusion}
\end{table}
The Majorana axes and Majorana involutions correspond to the $2A$-axes in the Griess algebra and the $2A$-involutions in the Monster respectively. The fact that the $2A$-axes obey the axioms (M3) - (M7) was implicitly stated in \cite{Norton96} and was explicitly shown in Proposition 8.6.2 of \cite{Ivanov09}. The following is a natural axiomatisation of the relationship between the Monster and the Griess algebra.
\begin{defn}
A \emph{Majorana representation} is a tuple
\[
\textbf{R} = (G,T,V,\cdot, (,), \varphi, \psi)
\]
where 
\begin{itemize}
	\item $G$ is a finite group;
	\item $T$ is a $G$-stable set of generating involutions of $G$;
	\item $V$ is a real vector space equipped with an inner product $(\, , \, )$ and a commutative, bilinear product $\cdot$ satisfying (M1) and (M2) that is generated by a set $A$ of Majorana axes;
	\item $\varphi: G \rightarrow GL(V)$ is a linear representation that preserves both products,
	\item $\psi: T \rightarrow A$ is a bijective mapping such that
	\[
	\psi(t^g) = \psi(t)^{\varphi(g)}.
	\] 
\end{itemize}
\end{defn}
We can now state the seminal result of the theory, known as Sakuma's theorem. 
\begin{thm}[{{\cite{IPSS10}}}] 
\label{thm:IPSS10}
Let $\textbf{R} = (G,T,V,\cdot, (,), \varphi, \psi)$ be a Majorana representation of $G$, as defined above. For $t_0, t_1 \in T$ let $a_0 = \psi(t_0)$, $a_1 = \psi(t_1)$, $\tau_0 = \varphi(t_0)$, $\tau_1 = \varphi(t_1)$ and $\rho = t_0t_1$. Finally, let $D \leq GL(V)$ be the dihedral group $\langle \tau_0, \tau_1 \rangle$. Then 
\begin{enumerate}[(i)]
\item $|D| = 2N$ for $1 \leq N \leq 6$;
\item the subalgebra $U = \langle \langle a_0, a_1 \rangle \rangle$ is isomorphic to a dihedral algebra of type $NX$ for $X \in \{A,B,C\}$, the structure of which is given in Table \ref{tab:sakuma};
\item for $i \in \mathbb{Z}$ and $\epsilon \in \{0,1\}$, the vector $a_{2i+\epsilon}$ is the image of $a_{\epsilon}$ under the $i$-th power of $\rho$ and $\tau(a_{2i + \epsilon}) = \rho^{-i}\tau_{\epsilon}\rho^i$. 
\end{enumerate}
\end{thm}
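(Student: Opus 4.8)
The plan is to mimic the strategy of Sakuma's theorem (Theorem \ref{thm:IPSS10}) but at the level of the group of automorphisms generated by the two Majorana involutions. First I would observe that, since \textbf{R} is a Majorana representation, the images $\tau_0 = \varphi(t_0)$ and $\tau_1 = \varphi(t_1)$ are elements of $GL(V)$ of order at most $2$ (they are the Miyamoto involutions $\tau(a_0)$ and $\tau(a_1)$, which square to the identity by axiom (M6)), and $\varphi$ is a homomorphism; hence $D = \langle \tau_0, \tau_1 \rangle = \varphi(\langle t_0, t_1 \rangle)$ is a dihedral (possibly degenerate) quotient of $\langle t_0, t_1 \rangle$. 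The key point to establish for (i) is that $\rho = t_0 t_1$ — equivalently $\tau_0\tau_1$ — has order at most $6$. This is exactly where the $2A$-axis axiomatisation does its work: the subalgebra $U = \langle\langle a_0, a_1 \rangle\rangle$ is a Majorana algebra in its own right, and one invokes the two-generated classification result of \cite{IPSS10}, which says precisely that $U$ is one of the finitely many dihedral algebras indexed by $NX$ with $1 \le N \le 6$. Since the order of $\rho$ equals $N$ (or divides it, in the degenerate $1A$ case), this bounds $|D| = 2N \le 12$, giving (i); and the identification of $U$ with the algebra of the corresponding row of Table \ref{tab:sakuma}, including all structure constants and inner products, is exactly the content of (ii).

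For part (iii), the plan is bookkeeping: define $a_{2i+\epsilon} := a_\epsilon^{\varphi(\rho)^i} = a_\epsilon^{\varphi(\rho^i)}$ and check consistency with the $\psi$-equivariance. Concretely, since $\rho = t_0 t_1$ and $\psi(t^g) = \psi(t)^{\varphi(g)}$, one has $\psi(t_\epsilon^{\rho^i}) = \psi(t_\epsilon)^{\varphi(\rho^i)} = a_\epsilon^{\varphi(\rho)^i} = a_{2i+\epsilon}$, so these really are Majorana axes in $A$ and lie in $U$ because $U$ is $\varphi(\langle t_0,t_1\rangle)$-invariant (it is generated by $a_0,a_1$ and closed under the product, and $\varphi(\rho)$ preserves the product and sends $\{a_0,a_1\}$ into the set of axes of $U$ by the dihedral structure). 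The statement $\tau(a_{2i+\epsilon}) = \rho^{-i}\tau_\epsilon\rho^i$ then follows from the naturality of the Miyamoto map: conjugating a Majorana axis by a product-preserving invertible linear map conjugates its eigenspace decomposition accordingly, so $\tau(a^{\varphi(g)}) = \varphi(g)^{-1}\tau(a)\varphi(g)$; applying this with $g = \rho^i$ (and using $\varphi(\rho)^{-1} = \varphi(\rho^{-1})$) gives the claimed relation. One should also remark that in $U$ the indices are taken modulo $2N$, matching the dihedral action.

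The main obstacle is really just the careful invocation and re-derivation of the two-generated classification: one must verify that the pair $(a_0, a_1)$ together with the restricted products and form on $\langle\langle a_0,a_1\rangle\rangle$ satisfies the hypotheses of \cite{IPSS10} — in particular that axioms (M1), (M2) restrict to $U$ (immediate, since they are universally quantified over $V \supseteq U$) and that $a_0, a_1$ remain Majorana axes relative to $U$, which requires knowing that the eigenspace decomposition of each $a_i$ restricts to $U$, i.e. $U = (U \cap V_1^{(a_i)}) \oplus (U \cap V_0^{(a_i)}) \oplus (U \cap V_{1/4}^{(a_i)}) \oplus (U \cap V_{1/32}^{(a_i)})$. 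This is the standard fact that an invariant subalgebra inherits the grading because $U$ is spanned by eigenvectors of $a_i$ — one proves it by noting $U$ is invariant under the adjoint action of $a_i$ (as $a_i \in U$) and this action is semisimple on $V$, hence on $U$. Once this is in hand, (i)–(iii) follow formally, so I would present the proof as: (1) reduce to $U$ and check it is a two-generated Majorana algebra; (2) quote \cite{IPSS10}/Table \ref{tab:sakuma} for (i) and (ii); (3) do the equivariance bookkeeping for (iii).
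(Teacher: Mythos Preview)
The paper does not prove this theorem at all: it is quoted verbatim as the main result of \cite{IPSS10} (the Majorana version of Sakuma's theorem) and is used as a black box throughout. There is therefore no ``paper's own proof'' to compare your proposal against.

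That said, your proposal has a genuine circularity problem. The substantive content of the theorem is precisely the two-generated classification --- that $\langle\langle a_0,a_1\rangle\rangle$ is one of the nine dihedral algebras and hence that $\tau_0\tau_1$ has order at most $6$. Your plan for parts (i) and (ii) is to ``invoke the two-generated classification result of \cite{IPSS10}'', but that \emph{is} the theorem you are asked to prove. Reducing from the representation $(G,T,V,\varphi,\psi)$ to the subalgebra $U$ and checking that $U$ inherits axioms (M1)--(M7) is a legitimate and easy step, but it leaves you exactly at the starting point of \cite{IPSS10}, not at its conclusion. An actual proof requires the hard analysis carried out in that paper (bounding $N$ via eigenvector calculations, the Norton inequality, and a case-by-case determination of the structure constants), none of which appears in your sketch.

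Your treatment of part (iii) --- the equivariance bookkeeping showing $a_{2i+\epsilon}=a_\epsilon^{\varphi(\rho^i)}$ and $\tau(a_{2i+\epsilon})=\rho^{-i}\tau_\epsilon\rho^i$ via naturality of the Miyamoto map --- is correct and is indeed the routine part of the statement. But (iii) alone is not the theorem; without an independent argument for (i) and (ii) your proposal is a restatement rather than a proof.
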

Table \ref{tab:sakuma} does not show all pairwise algebra and inner products of the basis vectors. Those that are missing can be recovered from the action of the group $\langle \tau_0, \tau_1 \rangle$ together with the symmetry between $a_0$ and $a_1$. We also note that the dihedral algebra of type $1A$ is a $1$-dimensional algebra generated by one Majorana axis and so is omitted from Table \ref{tab:sakuma}.

The following lemma can be deduced from the structure of the dihedral algebras (see Lemma 2.20, \cite{IPSS10}).
\begin{lem}
\label{lem:inclusion}
Let $U$ be an algebra of type $NX$ (as in Table \ref{tab:sakuma}) that is generated by Majorana axes $a_0$ and $a_1$. Then
\begin{enumerate}[(i)]
\item if $U$ is of type $4A$, $4B$ or $6A$ then the subalgebra generated by $a_0$ and $a_2$ is of type $2B$, $2A$ or $3A$ respectively;
\item if $U$ is of type $6A$ then the subalgebra generated by $a_0$ and $a_3$ is of type $2A$. 
\end{enumerate} 
\end{lem}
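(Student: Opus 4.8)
The plan is to identify each of these subalgebras by applying Sakuma's theorem (Theorem~\ref{thm:IPSS10}) to the pair of axes concerned. This needs two pieces of data: the order $N'$ of the dihedral group $\langle \tau(a_0), \tau(a_k)\rangle$ of Miyamoto involutions, obtained from the structure of $D = \langle \tau_0, \tau_1 \rangle$, and --- in the cases where $N'$ alone does not determine the type --- the inner product $(a_0, a_k)$, obtained from the explicit description of $U$ in Table~\ref{tab:sakuma}.

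First I would carry out the group-theoretic bookkeeping. Write $\rho = t_0 t_1$, of order $N$, identified with $\varphi(\rho) = \tau_0\tau_1 \in GL(V)$. Since $D$ is dihedral, $\tau_0$ inverts $\rho$ under conjugation, so $\tau_0\rho^{-1}\tau_0 = \rho$, and from $\rho = \tau_0\tau_1$ we get $\tau_1 = \tau_0\rho$. By Theorem~\ref{thm:IPSS10}(iii), $\tau(a_2) = \rho^{-1}\tau_0\rho$ and $\tau(a_3) = \rho^{-1}\tau_1\rho$, whence
\[
\tau(a_0)\tau(a_2) = \tau_0\rho^{-1}\tau_0\rho = \rho^2, \qquad
\tau(a_0)\tau(a_3) = \tau_0\rho^{-1}\tau_1\rho = (\tau_0\rho^{-1}\tau_0)\rho^2 = \rho^3 .
\]
Thus $\langle \tau(a_0), \tau(a_2)\rangle = \langle \tau_0, \rho^2\rangle$ is dihedral of order $2\,\mathrm{ord}(\rho^2)$, and $\langle \tau(a_0), \tau(a_3)\rangle$ has order $2\,\mathrm{ord}(\rho^3)$. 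Since $a_0, a_2, a_3 \in A$ by Theorem~\ref{thm:IPSS10}(iii), Sakuma's theorem applies to each pair, and I would read off: for $U$ of type $4A$ or $4B$, $\mathrm{ord}(\rho^2) = 2$, so $\langle\langle a_0, a_2\rangle\rangle$ has type $2A$ or $2B$; for $U$ of type $6A$, $\mathrm{ord}(\rho^2) = 3$ and $\mathrm{ord}(\rho^3) = 2$, so $\langle\langle a_0, a_2\rangle\rangle$ has type $3A$ or $3C$ and $\langle\langle a_0, a_3\rangle\rangle$ has type $2A$ or $2B$.

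Next I would settle the remaining letter, for which it suffices to compute one inner product: among dihedral algebras of a fixed type-number the value of $(a_0, a_1)$ is $\tfrac{1}{8}$ for $2A$ versus $0$ for $2B$, and $\tfrac{13}{256}$ for $3A$ versus $\tfrac{1}{64}$ for $3C$. I would extract $(a_0, a_2)$, respectively $(a_0, a_3)$, from the description of $U$ in Table~\ref{tab:sakuma}, recovering the entry from the action of $D$ together with the $a_0 \leftrightarrow a_1$ symmetry if it is not listed directly, as explained after Theorem~\ref{thm:IPSS10}. This should give $(a_0, a_2) = 0$ when $U$ is of type $4A$ (so $\langle\langle a_0, a_2\rangle\rangle$ is of type $2B$), $(a_0, a_2) = \tfrac{1}{8}$ when $U$ is of type $4B$ (so the subalgebra is of type $2A$), and, when $U$ is of type $6A$, $(a_0, a_2) = \tfrac{13}{256}$ and $(a_0, a_3) = \tfrac{1}{8}$ (so those subalgebras have type $3A$ and $2A$). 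Combining these with the orders found above proves both parts.

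The dihedral bookkeeping and the recognition of a dihedral algebra from $(a_0,a_1)$ are routine. The step I expect to demand care is the table lookup: Table~\ref{tab:sakuma} omits some pairwise products, so the entries $(a_0,a_2)$ and $(a_0,a_3)$ may need to be reconstructed, and one must track the correct auxiliary axis whenever a product of $U$ is expressed through the $2A$- or $3A$-axis attached to a power of $\rho$. An alternative that avoids inner products altogether would be to exhibit, inside $U$, the extra Majorana axis that forces the larger type (the $2A$-axis of $\rho^3$ in type $6A$, and similarly in the other cases), but the inner-product computation is the shorter of the two.
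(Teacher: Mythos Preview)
Your proposal is correct. The paper itself does not give a proof of this lemma: it simply states that the result ``can be deduced from the structure of the dihedral algebras'' and refers to Lemma~2.20 of \cite{IPSS10}. Your argument is precisely the natural way to make that deduction explicit, and the two-step strategy (determine $N'$ from the dihedral group $\langle\tau(a_0),\tau(a_k)\rangle$, then pin down the letter from an inner product) is exactly right.

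A couple of minor remarks. First, your worry about having to reconstruct missing table entries is unnecessary here: all four inner products you need, $(a_0,a_2)$ in types $4A$, $4B$, $6A$ and $(a_0,a_3)$ in type $6A$, are listed explicitly in Table~\ref{tab:sakuma}. Second, the alternative you mention at the end is in fact even shorter than the inner-product route: the \emph{algebra} products $a_0\cdot a_2$ (and $a_0\cdot a_3$ in the $6A$ case) are also given explicitly in the table, and each has exactly the form of the claimed dihedral type. For instance, in type $4B$ the table gives $a_0\cdot a_2 = \tfrac{1}{2^3}(a_0+a_2-a_{\rho^2})$, which is the $2A$ multiplication rule with third axis $a_{\rho^2}$; in type $6A$ the listed products $a_0\cdot a_2$ and $a_0\cdot u_{\rho^2}$ match the $3A$ rules verbatim, and $a_0\cdot a_3=\tfrac{1}{2^3}(a_0+a_3-a_{\rho^3})$ is again the $2A$ rule. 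So one can bypass Sakuma's theorem entirely and simply observe that the relevant subspaces close under the product with the prescribed structure constants. Either route is fine.
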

\begin{table}
\begin{center}
\vspace{0.35cm}
\noindent
\begin{tabular}{|c|c|c|}
\hline
&&\\
 Type & Basis & Products and angles \\
&&\\
\hline
&&\\
2A & $a_0,a_1,a_\rho$ & $a_0 \cdot a_1=\frac{1}{2^3}(a_0+a_1-a_\rho),~a_0 \cdot a_\rho=\frac{1}{2^3}(a_0+a_\rho-a_1)$ \\
&&$(a_0,a_1)=(a_0,a_\rho)=(a_1,a_\rho)=\frac{1}{2^3}$\\
&& \\ 
2B & $a_0,a_1$ &$a_0 \cdot a_1=0$,~$(a_0,a_1)=0$ \\
&&\\
&  &$a_0 \cdot a_1=\frac{1}{2^5}(2a_0+2a_1+a_{-1})-\frac{3^3 \cdot 5}{2^{11}}u_\rho$\\
3A& $a_{-1},a_0,a_1,$ & $a_0 \cdot u_\rho=\frac{1}{3^2}(2a_0-a_1-a_{-1})+\frac{5}{2^5}u_\rho$~~~~\\
&$u_\rho$& $u_\rho \cdot u_\rho=u_\rho$\\
&& $(a_0,a_1)=\frac{13}{2^8}$,~$(a_0,u_\rho)=\frac{1}{2^2}$,~$(u_\rho,u_\rho)=\frac{2^3}{5}$
\\
&&\\
3C & $a_{-1},a_0,a_1$ & $a_0 \cdot a_1=\frac{1}{2^6}(a_0+a_1-a_{-1}),~(a_0,a_1)=\frac{1}{2^6}$ \\
&&\\ 
&  & ~$a_0 \cdot a_1=\frac{1}{2^6}(3a_0+3a_1+a_2+a_{-1}-3v_\rho)$\\
4A & $a_{-1},a_0,a_1,$ & $a_0 \cdot v_\rho=\frac{1}{2^4}(5a_0-2a_1-a_2-2a_{-1}+3v_\rho)$\\
&$a_2,v_\rho$&~$v_\rho \cdot v_\rho=v_\rho$, ~$a_0 \cdot a_2=0$ \\
& & $(a_0,a_1)=\frac{1}{2^5},~(a_0,a_2)=0,~(a_0,v_\rho)=\frac{3}{2^3},~(v_\rho,v_\rho)=2$\\
&&\\
4B & $a_{-1},a_0,a_1,$ & $a_0 \cdot a_1=\frac{1}{2^6}(a_0+a_1-a_{-1}-a_2+a_{\rho^2})$
\\
& $a_2,a_{\rho^2}$ & $a_0 \cdot a_2=\frac{1}{2^3}(a_0+a_2-a_{\rho^2})$\\
&& $(a_0,a_1)=\frac{1}{2^6},~(a_0,a_2)=(a_0,a_\rho)=\frac{1}{2^3}$ \\
&&\\
&& $a_0 \cdot a_1=\frac{1}{2^7}(3a_0+3a_1-a_2-a_{-1}-a_{-2})+w_\rho$
\\
 5A & $a_{-2},a_{-1},a_0,$ & $a_0 \cdot a_2=\frac{1}{2^7}(3a_0+3a_2-a_1-a_{-1}-a_{-2})-w_\rho$
\\
& $a_1,a_2,w_\rho$ & $a_0 \cdot w_\rho=\frac{7}{2^{12}}(a_{1}+a_{-1}-a_2-a_{-2})+\frac{7}{2^5}w_\rho$\\
& & $w_\rho \cdot w_\rho=\frac{5^2 \cdot 7}{2^{19}}(a_{-2}+a_{-1}+a_0+a_1+a_2)$\\
&&$(a_0,a_1)=\frac{3}{2^7},~(a_0,w_\rho)=0$, $(w_\rho,w_\rho)=\frac{5^3 \cdot 7}{2^{19}}$\\
&& \\
& & $a_0 \cdot a_1=\frac{1}{2^6}(a_0+a_1-a_{-2}-a_{-1}-a_2-a_3+a_{\rho^3})+\frac{3^2 \cdot 5}{2^{11}}u_{\rho^2}$\\
6A& $a_{-2},a_{-1},a_0,$ &$a_0 \cdot a_2=\frac{1}{2^5}(2a_0+2a_2+a_{-2})-\frac{3^3 \cdot 5}{2^{11}}u_{\rho^2}$  \\ 
&$a_1,a_2,a_3$  &$a_0 \cdot u_{\rho^2}=\frac{1}{3^2}(2a_0-a_2-a_{-2})+\frac{5}{2^5}u_{\rho^2}$  \\
&$a_{\rho^3},u_{\rho^2}$ & $a_0 \cdot a_3=\frac{1}{2^3}(a_0+a_3-a_{\rho^3})$, $a_{\rho^3} \cdot u_{\rho^2}=0$, $(a_{\rho^3},u_{\rho^2})=0$\\
&&$(a_0,a_1)=\frac{5}{2^8}$, $(a_0,a_2)=\frac{13}{2^8}$, $(a_0,a_3)=\frac{1}{2^3}$\\
&&\\
\hline
\end{tabular}
\caption{The dihedral Majorana algebras}
\label{tab:sakuma}  
\end{center}
\end{table}
If $t,s$ and $ts$ are distinct $2A$-involutions in the Monster, then any two of their corresponding $2A$-axes generate a dihedral algebra of type $2A$ which also contains the third of these axes. This property is independent of the axioms (M1) - (M7) and so we consider it as a further axiom, which we define as axiom (M8) below. We assume axiom (M8) for all Majorana representations henceforth, as do most (but not all) papers concerning Majorana theory.
\begin{description}
    \item[M8]  Let $t_i \in T$ and $a_i := \psi(t_i)$ for $ 0 \leq i \leq 2$. If $a_0$ and $a_1$ generate a dihedral subalgebra of type $2A$, then $t_0t_1 \in T$ and $\psi(t_0t_1) = a_{\rho}$. If $t_0t_1t_2 = 1$ then the subalgebra generated by $a_0$ and $a_1$ is of type $2A$ and $a_2 = a_{\rho}$.
\end{description}
Given $t_0, t_1 \in T$, it is usually possible to determine the isomorphism type of the algebra generated by $\psi(t_0)$ and $\psi(t_1)$ using Lemma \ref{lem:inclusion} and axiom (M8). For example, if $t_0, t_1 \in T$ such that $o(t_0t_1) = 6$ then $\langle \langle \psi(t_0), \psi(t_1) \rangle \rangle$ is of type $6A$ and $\langle \langle \psi(t_0), \psi(t_1t_0t_1t_0t_1) \rangle \rangle$ is of type $2A$ and so we must have $(t_0t_1)^3 \in T$. We rely heavily on this observation in Section \ref{sec:main}.

\section{Triangle-Point Groups}

With Sakuma's classification of the Majorana algebras generated by two Majorana axes complete, it is natural to consider the classification of algebras generated by three Majorana axes. Whilst the question of classifying all 3-generated Majorana algebras is clearly beyond the scope of this work (the Griess algebra itself is one such example), we consider a natural first step towards this question. 

In particular, we are interested in the classification of Majorana algebras generated by three axes, two of which generate a $2A$ dihedral algebra. We begin by considering the structure of the groups $G$ that may admit a Majorana representation $(G,T,V)$ where $V$ is of the desired form.

\begin{defn} Let $G$ be a group such that
\begin{enumerate}[(i)]
    \item $G$ is generated by three elements $a$, $b$, $c$ of order $2$ such that $ab$ is also of order $2$;
    \item for any two elements $t,s \in X := a^G \cup b^G \cup c^G \cup (ab)^G$, the product $ts$ has order at most 6;
\end{enumerate}  
then $G$ is a \emph{triangle-point} group. 
\end{defn}
\begin{prop}
Suppose that $V$ is a Majorana algebra generated by three Majorana axes $a_0,a_1,a_2$ such that the dihedral algebra $\langle \langle a_0,a_1 \rangle \rangle$ is of type $2A$. Then $V$ must exist as a Majorana representation $(G,T,V)$ where $G = \langle a,b,c \rangle $ is a triangle-point group and $T \subset G$ is such that $a,b,c,ab \in T$.  
\end{prop}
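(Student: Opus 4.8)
The plan is to construct the datum $(G,T,V,\cdot,(\,,\,),\varphi,\psi)$ from the Miyamoto involutions attached to the axes. First I would let $A$ be the smallest set of Majorana axes in $V$ that contains $a_0,a_1,a_2$, is closed under $b\mapsto b^{\tau(c)}$ for all $b,c\in A$ (these images are again axes, since each $\tau(c)$ preserves both products), and contains the third axis of any dihedral subalgebra of type $2A$ spanned by two of its members. Then set $G:=\langle\tau(b):b\in A\rangle\le GL(V)$, let $\varphi$ be the inclusion, put $T:=\{\tau(b):b\in A\}$ and define $\psi(\tau(b))=b$. Each $\tau(b)$ preserves the algebra product by (M6) and is an isometry of $(\,,\,)$, since the eigenspaces $V^{(b)}_\mu$ are mutually orthogonal by (M1) and $\tau(b)$ acts as $\pm1$ on each; hence $G$ acts by automorphisms of both products and $\varphi$ is a representation of $G$. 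The identity $\tau(b)^g=\tau(b^g)$, valid for every algebra automorphism $g$, shows that $A$, and therefore $T$, is $G$-stable and that $\psi(t^g)=\psi(t)^{\varphi(g)}$; since $a_0,a_1,a_2\in A$, the set $A$ generates $V$, and $T$ generates $G$ by construction. The only ingredient requiring an external reference at this stage is that distinct axes have distinct Miyamoto involutions, which makes $\psi$ a well-defined bijection.

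Next I would identify the three involutions exhibiting $G$ as a triangle-point group: $a:=\tau(a_0)$, $b:=\tau(a_1)$, $c:=\tau(a_2)$. As $\langle\langle a_0,a_1\rangle\rangle$ has type $2A$, Theorem \ref{thm:IPSS10}(i) gives $|\langle\tau(a_0),\tau(a_1)\rangle|=4$, so $ab$ has order $2$; and the structure of the $2A$ algebra yields $\tau(a_0)\tau(a_1)=\tau(a_\rho)$, the Miyamoto involution of the third axis $a_\rho\in A$, so $ab\in T$. More generally, if $d\in A$ is the third axis of a $2A$-subalgebra spanned by $d_1,d_2\in A$ then $\tau(d)=\tau(d_1)\tau(d_2)$; combining this with $\tau(b)^g=\tau(b^g)$ and inducting over the construction of $A$ shows both that $G=\langle a,b,c\rangle$ and that every element of $X:=a^G\cup b^G\cup c^G\cup(ab)^G$ is a $G$-conjugate of some $\tau(a_i)$ or of $\tau(a_\rho)$, hence equal to $\tau(d)$ for some $d\in A$; in particular $X\subseteq T$ and $a,b,c,ab\in T$. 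Finally, for $s=\tau(d)$ and $t=\tau(e)$ with $d,e\in A$, Sakuma's theorem (Theorem \ref{thm:IPSS10}) says $\langle\langle d,e\rangle\rangle$ is a dihedral algebra of type $NX$ with $N\le6$, so $|\langle s,t\rangle|=2N$ and $o(st)\le6$. This establishes both defining conditions of a triangle-point group.

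The hard part will be checking that $G$ is finite, as the definition of a Majorana representation demands; this is not automatic, since a group generated by involutions with pairwise products of bounded order may well be infinite. I would obtain it from the finiteness of $A$: every axis has length $1$, and by Sakuma's theorem the inner product of two distinct axes takes one of the finitely many values appearing in Table \ref{tab:sakuma}, so $V$ can contain only finitely many axes provided it is finite dimensional --- which is part of the standing setup for Majorana algebras, and is precisely the sort of point underpinning Decelle's work. A little care is also needed with degenerate configurations, for instance when some $\tau(a_i)$ is trivial or two of $a,b,c$ coincide, but then $V$ reduces to an algebra generated by fewer axes and is handled separately. Granting these points, $(G,T,V,\cdot,(\,,\,),\varphi,\psi)$ is a Majorana representation of the triangle-point group $G=\langle a,b,c\rangle$ with $a,b,c,ab\in T$, as claimed.
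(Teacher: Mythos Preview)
Your approach is essentially the same as the paper's: build $G$ from the Miyamoto involutions of the generating axes, take $\varphi$ to be the inclusion, define $T$ as a suitable $G$-stable set of Miyamoto involutions closed under the $2A$ rule, and invoke Sakuma's theorem (Theorem~\ref{thm:IPSS10}) for the $6$-transposition property. The paper constructs $T$ directly rather than first building the axis set $A$, but this is a cosmetic difference.

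Two remarks. First, the injectivity of $b\mapsto\tau(b)$ on axes does not need an outside reference: if $\tau(b)=\tau(c)$ then $\langle\tau(b),\tau(c)\rangle$ has order at most~$2$, so by Theorem~\ref{thm:IPSS10}(i) the subalgebra $\langle\langle b,c\rangle\rangle$ is of type $1A$, i.e.\ one-dimensional, forcing $b=c$. The paper cites exactly this combination (axiom~(M8) and Theorem~\ref{thm:IPSS10}) for the bijectivity of $\psi$. Second, you are right to flag finiteness of $G$ as the genuinely nontrivial point; the paper's proof simply does not address it. Your sketch via compactness of the unit sphere---distinct axes have length~$1$ and pairwise inner products bounded by $\tfrac{1}{8}$, hence are uniformly separated---is the natural argument, but it does require $V$ to be finite-dimensional, which is not among axioms (M1)--(M8) as stated. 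In practice this is a standing hypothesis throughout the literature (and in Decelle's thesis), so your appeal to it is reasonable; just be aware that the paper leaves this implicit rather than proving it.
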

\begin{proof}
For $0 \leq i \leq 2$, let $\tau_i := \tau(a_i)$ and let $G := \langle \tau_0, \tau_1, \tau_2 \rangle$. We will chose a set $T$ of involutions of $G$ and define maps $\varphi$ and $\psi$ such that $(G,T,V,\varphi, \psi)$ is a Majorana representation. 

As $G$ is generated by Majorana involutions, we already have $G \leq GL(V)$ and so $\varphi$ is just the identity map. Moreover, if $a$ is a Majorana axis of $V$ such that $\tau(a) \in G$ and $g \in G$ then we define $\psi(\tau(a)^g) = a^{\varphi(g)}$.

We now choose $T \subset G$ to be a $G$-closed set of involutions of G such that $\tau_0, \tau_1, \tau_2 \in T$ and such that, if $t,s \in T$ and $o(ts) = 2$, then $ts \in T$ if and only if the algebra generated by $\psi(t)$ and $\psi(s)$ is of type $2A$. Then all elements of $T$ are of the form $\tau(a)^g$ for a Majorana axis $a \in V$ and $g \in G$ and so $\psi$ is a map from $T$ to $A$. Axiom (M8) and Theorem \ref{thm:IPSS10} imply that this is a bijection. 

Finally, from Theorem \ref{thm:IPSS10}, if $t,s \in T$ then $o(ts) \leq 6$ and so $G$ must be a triangle-point group, as required.
\end{proof}

These groups also provide further motivation to this question. We say that a triangle-point group $G = \langle a,b,c \rangle$, $2A$-\emph{embeds} into the Monster if there exists an injective homomorphism $f: G \rightarrow \mathbb{M}$ such that $f(a), f(b), f(c), f(ab) \in 2A$. The triangle-point subgroups which $2A$-embed into the Monster were studied by Norton in \cite{Norton85} in which he considered the possibility of constructing the Griess algebra using a permutation module constructed from triangle-point configurations in the Monster graph. 

In particular, he produced a list of the triangle-point groups that $2A$-embed into the Monster, although he did not include a proof of this result. Our work provides a proof that this list is complete (see the discussion following Theorem \ref{thm:main}). 

A crucial first step in the proof of our main result was completed by Decelle in Theorem 3.3 of \cite{Decelle13}.
\begin{thm}[\cite{Decelle13}]
\label{thm:sophie}
Each triangle-point group must occur as a quotient of at least one of the $11$ groups given in Table \ref{tab:tpgroups}. Each of these groups occurs as a quotient of a group of the form
\[
G^{(m,n,p)} := \langle a,b,c \mid a^2, b^2, c^2, (ab)^2, (ac)^m, (bc)^n, (abc)^p \rangle
\]
for $m,n,p \in [1..6]$, potentially with additional relations of the form $R_i^{r_i}$ for $i \in [1..5]$ and 
\[
R_1 := a \cdot b^c, \, R_2 := ab \cdot b^c, \, R_3 := ab \cdot a^c,  \, R_4 := c \cdot b^{ca}, \, R_5 := c^a\cdot c^{bc}. 
\]
\end{thm}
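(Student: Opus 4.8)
The plan is to reduce an arbitrary triangle-point group to a quotient of one of finitely many explicit finitely-presented groups, and then to identify those groups by coset enumeration in a computer algebra system.

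First I would extract the generic presentation. If $G = \langle a, b, c\rangle$ is a triangle-point group then $(ab)^2 = 1$ says that $a$ and $b$ commute, and each of $a^G, b^G, c^G, (ab)^G$ meets $X$, so $a, b, c, ab$ all lie in $X$. Hence the pairwise products $ac$, $bc$ and $(ab)c = abc$ each have order at most $6$; writing $m := o(ac)$, $n := o(bc)$, $p := o(abc)$, we get $m, n, p \in [1..6]$. Since $a^2, b^2, c^2, (ab)^2, (ac)^m, (bc)^n, (abc)^p$ all vanish in $G$, the map on generators extends to a surjection $G^{(m,n,p)} \twoheadrightarrow G$. Thus every triangle-point group is a quotient of some $G^{(m,n,p)}$ with $m, n, p \in [1..6]$, and it remains to understand these $216$ groups.

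Next I would exploit the symmetry of the configuration to cut down the parameter triples. The three involutions $a$, $b$, $ab$ of $\langle a, b\rangle \cong 2^2$ play interchangeable roles: $\{a, ab, c\}$ and $\{b, ab, c\}$ are generating triples of exactly the same shape (for instance $a \cdot (ab) = b$), and passing between the three triples permutes the multiset $\{o(ac), o(bc), o(abc)\}$; together with the obvious symmetry $a \leftrightarrow b$ this realises a full $S_3$-action on $(m, n, p)$. So I may assume $m \le n \le p$, which leaves the $\binom{8}{3} = 56$ weakly increasing triples. The degenerate cases are then disposed of directly: $m = 1$ forces $a = c$ and $G = \langle a, b\rangle \cong 2^2$; $m = 2$ forces $[a, c] = 1$, which with $[a, b] = 1$ collapses the group substantially; and similarly when any of $m, n, p$ is small. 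Thus only a short list of genuinely non-degenerate triples with $2 \le m \le n \le p \le 6$ survives.

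For each surviving triple I would run coset enumeration on $G^{(m,n,p)}$. If it is finite, then it and its triangle-point quotients are candidates for the table. If it is infinite, the triangle-point axiom must force a proper quotient, and the task is to locate a small set of offending products. Each of the words $R_1 = a \cdot b^c$, $R_2 = (ab) \cdot b^c$, $R_3 = (ab) \cdot a^c$, $R_4 = c \cdot b^{ca}$, $R_5 = c^a \cdot c^{bc}$ is visibly a product of two elements of $X$ (e.g.\ $a \in a^G$ and $b^c \in b^G$), so in any triangle-point group the order $r_i$ of $R_i$ is at most $6$. I would therefore adjoin the relations $R_i^{r_i} = 1$ for the forced exponents, re-run coset enumeration on the enlarged presentation, and iterate, descending through quotients until a finite group is reached. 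The substance of the proof is then to verify that this descent always terminates, that the five families $R_i$ suffice (no further offending product is ever needed), and that the finitely many groups reachable this way have exactly the $11$ maximal members listed in Table~\ref{tab:tpgroups}; the second assertion of the theorem, giving presentations of those $11$, merely records the output of the search.

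The hard part is the group-theoretic bookkeeping rather than any single idea. (i) Finiteness: coset enumeration need not terminate, so one must use the relations $R_i^{r_i} = 1$ aggressively to keep indices manageable, and in the genuinely infinite cases one must certify infiniteness --- e.g.\ by exhibiting an infinite quotient, or two elements of $X$ generating an infinite dihedral group --- rather than merely failing to close an enumeration. (ii) Combinatorial blow-up: a priori each of the $56$ triples spawns up to $6^5$ choices of exponents $r_i$, so the $S_3$-symmetry and early infiniteness tests are essential to prune the search tree. (iii) Completeness: one must confirm that once all admissible relations $R_i^{r_i} = 1$ are imposed, no surviving branch is an infinite triangle-point group, so that nothing outside the list of $11$ can occur.
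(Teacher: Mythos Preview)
The paper does not prove this theorem: it is quoted as Theorem~3.3 of Decelle's thesis \cite{Decelle13} and used as a black box, so there is no ``paper's own proof'' to compare against. Your outline is a plausible reconstruction of the strategy one would expect such a proof to follow, and the paper's surrounding remarks (the $S_3$-symmetry on $\{a,b,ab\}$, the explicit use of the $R_i$ in Lemmas~\ref{lem:r4} and~\ref{lem:m66}) are consistent with it.

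That said, what you have written is a plan, not a proof, and you correctly flag where the work lies. A few points to sharpen. First, the phrase ``each of $a^G, b^G, c^G, (ab)^G$ meets $X$'' is vacuous, since $X$ is defined as their union; what you need (and use) is simply that $a,b,c,ab\in X$. Second, certifying infiniteness of $G^{(m,n,p)}$ is not actually required for the theorem as stated: you only need that \emph{after} imposing all admissible relations $R_i^{r_i}=1$ the resulting group is finite and is a quotient of one of the eleven. Third, the genuine difficulty you identify in~(iii) is the heart of the matter --- one must verify that for \emph{every} legal choice of $(m,n,p,r_1,\dots,r_5)$ the presented group either collapses or maps onto one of the $G_i$, and that the five words $R_1,\dots,R_5$ suffice, i.e.\ no further products of elements of $X$ need be bounded. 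This is exactly the case-by-case coset-enumeration work carried out in \cite{Decelle13}; your proposal describes its architecture but does not perform it.
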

\begin{table}%
\begin{center}
\begin{tabular}{|>{$}c<{$}|>{$}c<{$} >{$}c<{$} >{$}c<{$} c|}
\hline
\textrm{Name} & \begin{tabular}{>{$}c<{$}} \textrm{Isomorphism} \\ \textrm{type} \end{tabular} & \begin{tabular}{>{$}c<{$}} G^{(m,n,p)} \\ (m,n,p) \end{tabular} & \begin{tabular}{>{$}c<{$}} \textrm{Added relations} \\ R_i^{r_i} \\ (r_1,r_2,r_3,r_4,r_5) \end{tabular} & $2A$-embeds in $\mathbb{M}$ \\ \hline
&&&&\\
G_1 & 2 \textrm{ wr } 2^2 & (4,4,4) & & Y  \\
G_2 & (S_3 \times S_3) : 2^2 & (4,4,6) & & Y  \\
G_3 & 2^4 : D_{10} & (4,5,5) & & Y  \\
G_4 & 2 \times S_5 & (4,5,6) & & Y  \\
G_5 & L_2(11) & (5,5,5) & & Y \\
G_6 & (2^4 : D_{12}) \times 2 & (4,6,6) & (4, -, -,-,-) & Y  \\ 
G_7 & 2^4 : A_5 & (5,5,6) & (-,5,-,-,-) & Y  \\
G_8 & 2 \times S_6 & (5,6,6) & (-,4,-,-,-) & Y  \\
G_9 & ( 2^4 : (S_3 \times S_3)) \times 2 & (6,6,6) & (4, \, 6, \, 6,-,-) & N \\
G_{10} & 2^5 : S_5 & (6,6,6) & (5, \, 5, \, 5, \, 4,-) &  Y  \\
G_{11} & (3^4 : 2) : (3^{1+2}_+ : 2^2) & (6,6,6) & (6, \, 6, \, 6,-,\,3) & N \\ &&&& \\ \hline
\end{tabular}
\caption{Triangle-Point Groups}
\label{tab:tpgroups}
\end{center}
\end{table}
\begin{rem}
It is important to note that any permutation of the elements $a$, $b$ and $ab$ in the presentation of $G := G^{(m,n,p)}$ induces an automorphism of $G$. Such an automorphism leads to a permutation of ${m,n,p}$ and so we assume without loss of generality that $m \leq n \leq p$. 
\end{rem}
Our first aim is to use Theorem \ref{thm:sophie} to construct a list of all possible triangle-point groups. We do this by classifying the normal subgroups, and the corresponding quotients, of the groups $G_1, \ldots, G_{11}$. However, some smaller examples will appear as quotients of many of the above groups. Thus, to significantly reduce the number of normal subgroups that we must classify, we first consider small examples of triangle-point groups.

\begin{prop}
\label{prop:tp12}
Suppose that $G$ is a triangle-point group of order at most 12. Then $G$ is either a dihedral group or an elementary abelian group of order 8. 
\end{prop}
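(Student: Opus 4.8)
The plan is to reduce to a short case analysis on $|G|$ and then quote the classification of groups of small order.

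First I would pin down $\langle a, b\rangle$. Since $a$, $b$ and $ab$ all have order $2$, we get $ab = ba$, and $a \neq b$ because $ab \neq 1$; hence $\langle a, b \rangle = \{1, a, b, ab\}$ is a Klein four-group. In particular $4 \mid |G|$, so together with $|G| \le 12$ this leaves only $|G| \in \{4, 8, 12\}$. I would also record that $G$, being generated by the involutions $a$, $b$, $c$, is generated by its set of involutions.

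Then I would treat the three cases. If $|G| = 4$, then $\langle a, b\rangle$ already has order $4$, so $G = \langle a, b\rangle \cong 2^2$, the dihedral group of order $4$. If $|G| = 8$, I would list the five isomorphism types of groups of order $8$: of $C_8$, $C_4 \times C_2$ and $Q_8$, the first and third have a unique involution and the second has exactly three, generating a subgroup of order $4$; in all three cases the involutions generate a proper subgroup, so none of these is generated by involutions and none can occur. This leaves $G \cong 2^3$ (elementary abelian of order $8$) or $G \cong D_8$ (dihedral of order $8$), both permitted by the statement. If $|G| = 12$, the groups of order $12$ are $C_{12}$, $C_6 \times C_2$, $A_4$, the dicyclic group of order $12$, and $D_{12}$; in the first four the involutions generate a proper subgroup (the unique subgroup of order $2$ in the cyclic and dicyclic cases, and a copy of $2^2$ for $C_6 \times C_2$ and for $A_4$), so the only possibility is $G \cong D_{12}$, which is dihedral.

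This argument is essentially bookkeeping, and the only point requiring care is the group-by-group verification of which involutions are present and what subgroup they generate, so that no order-$8$ or order-$12$ group slips through; it is perhaps worth remarking that condition (ii) of the definition of a triangle-point group is not needed, as condition (i) alone already forces the conclusion in this range. If one preferred to avoid citing the classification of groups of order $8$ and $12$, an alternative would be to note that $\langle a, b\rangle$ has index $1$, $2$ or $3$ in $G$ and to analyse the permutation action of $G$ on its cosets, but quoting the classification is the shortest route.
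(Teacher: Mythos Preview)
Your proof is correct and follows essentially the same approach as the paper's own: reduce to $|G| \in \{4, 8, 12\}$ via the Klein four subgroup $\langle a, b\rangle$, then identify the groups of those orders that are generated by their involutions. The paper simply asserts the outcome of that last step, whereas you carry out the case-by-case verification explicitly; your remark that condition~(ii) of the definition is not needed here is also accurate.
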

\begin{proof}
Suppose that $G = \langle a,b,c \rangle$ is a triangle-point group. Then $G$ contains the subgroup $\langle a,b \rangle \cong 2^2$ and so the order of $G$ must be a multiple of 4 and so must be equal to $4$, $8$ or $12$. Up to isomorphism, the only groups of these orders that are generated by their involutions are $D_4$, $D_8$, $2^3$ and $D_{12}$ and it is easy to check that each of these is indeed a triangle-point group.  
\end{proof}

In Tables \ref{tab:normals} and \ref{tab:normals11} below, for each $i \in [1..11]$ we give a complete list of the non-trivial normal subgroups $N \triangleleft \, G_i$ such that $[G_i : N] > 12$. Note that the groups $G_3$ and $G_5$ have no such normal subgroups and are thus omitted from these tables.

The lists of normal subgroups in Table \ref{tab:normals} have been calculated in GAP \cite{GAP} using explicit generators of each of the groups $G_1, \ldots, G_{10}$ and by using the group presentation in the case of $G_{11}$. In most cases, the generators used below are given in \cite{Norton85}.  In particular, where possible, we choose these generators to be elements of $A_{12}$. 
\begin{prop}
For $1 \leq i \leq 10$, Table \ref{tab:normals} gives
\begin{itemize}
\item elements $a,b,c \in G_i$ such that $G_i = \langle a,b,c \rangle$ as a triangle-point group;
\item generators in terms of $a,b,c$ of all normal subgroups $N \trianglelefteq G_i$ such that $[G_i:N] > 12$;
\item the isomorphism types for the corresponding quotients $G_i/N$.
\end{itemize}
Table \ref{tab:normals11} gives generators of all normal subgroups $N \trianglelefteq G_{11}$ such that $[G_{11}:N] > 12$ and the isomorphism types for the corresponding quotients $G_{11}/N$.
\end{prop}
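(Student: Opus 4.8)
The proof is a finite computation, carried out in GAP \cite{GAP}. The plan is, for each $i$, first to realise $G_i$ as an explicit finite group, then to verify that the elements $a,b,c$ recorded in Table \ref{tab:normals} generate it as a triangle-point group, and finally to enumerate its normal subgroups of index exceeding $12$ together with the isomorphism types of the associated quotients. For $1 \le i \le 10$ we take $a,b,c$ to be the permutations listed in Table \ref{tab:normals} (following the generators of \cite{Norton85}, chosen inside $A_{12}$ wherever possible), check directly that $a^2 = b^2 = c^2 = (ab)^2 = 1$, and confirm that $|\langle a,b,c \rangle|$ equals the order of the group named in the second column of Table \ref{tab:tpgroups}; a comparison of composition factors and of \texttt{StructureDescription} then pins down the isomorphism type. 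For $G_{11}$ we instead start from the finite presentation of Theorem \ref{thm:sophie}, namely $G^{(6,6,6)}$ with the added relators $R_1^6$, $R_2^6$, $R_3^6$, $R_5^3$, run a coset enumeration to confirm that this presentation defines a group of order $17496 = 2^3 \cdot 3^7$, and extract from the coset table a faithful permutation representation.

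With each $G_i$ in hand we verify the two triangle-point axioms. Axiom (i) is immediate from the defining relations. For axiom (ii) we form the set $X := a^{G_i} \cup b^{G_i} \cup c^{G_i} \cup (ab)^{G_i}$, which is a union of at most four conjugacy classes, and check that $o(ts) \le 6$ for every $s \in X$ and every $t$ in a fixed transversal of the classes making up $X$; by $G_i$-conjugacy this suffices. This step is essentially already contained in Decelle's Theorem \ref{thm:sophie}, but it is cheap to repeat.

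To establish the main content of the proposition we call \texttt{NormalSubgroups} on each $G_i$. For $1 \le i \le 10$ this is routine, since the orders range only up to $|G_{10}| = 3840$; for $G_{11}$ the group again has order $17496$, so once a faithful permutation representation has been obtained the same function applies (alternatively the normal-subgroup lattice can be read off directly from the presentation). For each normal subgroup $N$ we compute the index $[G_i:N]$ and discard those with $[G_i:N] \le 12$: by Proposition \ref{prop:tp12} these correspond only to dihedral quotients and to $2^3$, and so are not tabulated. For each surviving $N$ we confirm that the words in $a,b,c$ listed in Table \ref{tab:normals} (or Table \ref{tab:normals11} when $i=11$) generate a subgroup whose normal closure is $N$ and whose index is as claimed, and we identify $G_i/N$ up to isomorphism using \texttt{IdGroup} and \texttt{StructureDescription}, matching the output against the final column of the table.

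The main obstacle is $G_{11}$: the coset enumeration for the presentation of Theorem \ref{thm:sophie} must be shown to complete without collapse and to return exactly $17496$ cosets, and one then needs a representation small enough for the normal-subgroup enumeration to terminate quickly. A secondary point requiring care throughout is to check not merely that $G_i/N$ has the stated isomorphism type but that the explicit generating words given for $N$ really generate $N$ and not some proper normal subgroup with an isomorphic quotient; this is settled by comparing $|\langle \text{listed words}\rangle^{G_i}|$ with $|N|$. As every verification is a finite computation, the proof reduces to these GAP calculations, whose outputs can be independently cross-checked via group orders and structure descriptions.
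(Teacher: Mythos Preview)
Your proposal is correct and follows essentially the same approach as the paper: the paper simply states that the tables were produced by GAP computations using explicit permutation generators for $G_1,\ldots,G_{10}$ (drawn from \cite{Norton85} and lying in $A_{12}$ where possible) and the finite presentation for $G_{11}$. Your write-up is a considerably more detailed and careful account of the same computation, including verification steps (triangle-point axioms, normal-closure checks) that the paper leaves implicit.
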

\begin{table}%
\begin{center}
\begin{tabular}{|>{$}l<{$} | >{$}l<{$} >{$}l<{$} >{$}l<{$} >{$}l<{$} |}
\hline
&&&& \\
G & \textrm{Generators } a,b,c & N & X \subset G \textrm{ s.t. } N = \langle X \rangle^G & G/N \\ 
&&&& \\ \hline
&&&& \\
G_1 & \ml{l}{(1,2)(3,4)\\(1,3)(2,4)(5,6)(7,8) \\ (1,5)(2,7)} 
    & \ml{l}{ 2^2 \\ 2^2 \\ 2^2 \\ 2} 
    & \ml{l}{ (ac)^2 \\ (bc)^2 \\ (abc)^2 \\ (a \cdot b^c)^2}
    & \ml{c}{ 2 \times D_8 \\ 2 \times D_8 \\ 2 \times D_8 \\ 2^4:2} \\
&&&& \\
G_2 & \ml{l}{(1,2)(3,4) \\ (5,6)(7,8) \\ (1,2)(3,9)(4,5)(6,10) } 
    & \ml{l}{3^2 \\ 2} 
    & \ml{l}{(a \cdot b^c)^2 \\ (a \cdot b^c)^3} 
    & \ml{l}{2 \times D_8 \\ (S_3 \times S_3):2} \\
&&&& \\
G_4 & \ml{l}{(1,2)(3,4) \\ (1,2)(3,4)(5,6)(7,8) \\ (1,9)(2,5)(3,4)(7,8)}
    & 2 
    
    & (ac)^2 
    & S_5 \\
&&&& \\
G_6 & \ml{l}{(1,2)(3,4) \\ (1,3)(2,4)(5,6)(7,8)(9,10)(11,12) \\ (1,2)(3,5)(4,7)(6,9)(8,11)(10,12) }
    & \ml{l}{2^4 \\ 2^4 \\ 2^3 \\ 2^3 \\ 2^3 \\ 2^2 \\ 2 }
    & \ml{l}{(bc)^3, (abc)^3 \\
            (ac)^2 \\
            (ab \cdot b^c)^3, (a^c \cdot c^b)^2 \\
            (bc)^3 \\
            (abc)^3 \\
            (a^c \cdot c^b)^2 \\
            (ab \cdot b^c)^3 }
    & \ml{l}{S_4 \\ 2^2 \times S_3 \\ 2 \times S_4 \\ 2 \times S_4 \\ 2 \times S_4 \\ 2^2 \times S_4 \\ 2^4:D_{12}} \\
&&&& \\  
G_7 & \ml{l}{   (1,3)(2,15)(4,13)(6,12)(7,11)(14,16) \\  
                (1,11)(2,12)(3,9)(4,10)(5,6)(13,14) \\
                (1,3)(2,15)(4,13)(6,12)(7,11)(14,16)}
    & 2^4 & (ac)^3 &  A_5 \\
&&&& \\
G_8 & \ml{l}{   (1,2)(7,8) \\
                (1,2)(3,4)(5,6)(9,10) \\
                (1,3)(4,5)(7,8)(9,10)}
    & 2 & ((bc)^3 \cdot b^{ca})^3 & S_6 \\
&&&& \\
G_9 & \ml{l}{   (1,2)(3,4)(5,6)(7,8) \\
                (1,8)(2,7)(3,4)(5,6) \\
                (2,5)(3,6)(9,10)(11,12)}
    & \ml{l}{2^4:3 \\ 2^4:3 \\ 2^5 \\ 2^4 \\ 2} 
    & \ml{l}{   (ac)^2, (a \cdot b^c)^2 \\ 
                (bc)^2, (a \cdot b^c)^2 \\ 
                (abc)^2, (a \cdot b^c)^2 \\ 
                (a \cdot b^c)^2 \\ 
                a \cdot (b \cdot c^{ac})^3} 
    & \ml{l}{2^2 \times S_3 \\ 2^2 \times S_3 \\ S_3 \times S_3 \\ 2 \times S_3 \times S_3 \\ 2^4 :(S_3 \times S_3) } \\
&&&& \\
G_{10}  & \ml{l}{   (1,2)(3,4)(5,6)(7,8)(9,10)(11,12) \\
                    (1,3)(2,4)(5,8)(6,7)(9,12)(10,11) \\
                    (1,7)(2,6)(3,9)(4,11)(5,10)(8,12)}
        & \ml{l}{2^5 \\ 2} 
        & \ml{l}{((ac)^2(bc)^2)^2 \\ (c^a \cdot (bc)^3)^3}
        & \ml{l}{S_5 \\ 2^4 : S_5} \\ 
&&&& \\ \hline
\end{tabular}
\caption{Some normal subgroups of $G_1, \ldots, G_{10}$}
\label{tab:normals}
\end{center}
\end{table}
\begin{table}%
\begin{center}
\begin{tabular}{ |>{$}l<{$} >{$}l<{$} >{$}l<{$} |} 
        
\hline
&& \\
N & X \subset G_{11} \textrm{ s.t. } N = \langle X \rangle^{G_{11}}& G_{11}/N \\ 
&& \\ \hline
&& \\
\ml{l}{ ((3^4:3):3) \\
        ((3^4:3):3) \\
        ((3^4:3):3) \\
        (3^4:3):2 \\
        (3^4:3):2 \\
        (3^4:3):2 \\
        3^4:3 \\
        3^4:3 \\
        3^4:3 \\
        3^4:2 \\        
        3^4:2 \\        
        3^4:2 \\        
        3^4 \\
        3^4 \\
        3^4 \\
        3^4 \\
        3^3 \\
        3^3 \\
        3^3 \\
        3^2 \\
        3^2 \\
        3^2 \\
        3} &
\ml{l}{ (ac)^2 \\
        (bc)^2, (a \cdot b^c)^2 \\
        (abc)^2 \\
        (ac)^3, (ab \cdot b^c)^2 \\
        (bc)^3, (ab \cdot a^c)^2 \\
        (abc)^3, (a \cdot a^c)^2 \\
        (a \cdot b^c)^2 \\
        (ab \cdot b^c)^2 \\
        (ab \cdot a^c)^2 \\
        (ac)^3 \\
        (bc)^3 \\
        (abc)^3 \\
        (acbcacb)^2 \\
        (a \cdot c^{bc})^2 \\
        (b \cdot c^{ac})^2 \\
        (ab \cdot c^{ac})^2 \\
        (ab \cdot a^{cbc})^2, (a \cdot b^{cabc})^2 \\
        (ab \cdot b^{cac})^2, (a \cdot b^{cabc})^2 \\
        (ab \cdot b^{cac})^2, (ab \cdot a^{cbc})^2 \\
        (a \cdot b^{cabc})^2, \\
        (ab \cdot b^{cac})^2, \\
        (ab \cdot a^{cbc})^2 \\
        c^{acbcacb}\cdot c^{bcacbca}} &
        
\ml{l}{ 2^2 \times S_3 \\
        2^2 \times S_3 \\
        2^2 \times S_3 \\
        S_3 \times S_3 \\ 
        S_3 \times S_3 \\ 
        S_3 \times S_3 \\ 
        2 \times S_3 \times S_3 \\
        2 \times S_3 \times S_3 \\
        2 \times S_3 \times S_3 \\
        (3^{1+2}_+ : 2^2) \\
        (3^{1+2}_+ : 2^2) \\
        (3^{1+2}_+ : 2^2) \\
        S_3 \times S_3 \times S_3 \\
        2 \times (3^{1+2}_+ : 2^2) \\
        2 \times (3^{1+2}_+ : 2^2) \\
        2 \times (3^{1+2}_+ : 2^2) \\
        S_3 : (3^{1+2}_+ : 2^2) \\
        S_3 : (3^{1+2}_+ : 2^2) \\
        S_3 : (3^{1+2}_+ : 2^2) \\
        (3^2 : 2) : (3^{1+2}_+ : 2^2) \\
        (3^2 : 2) : (3^{1+2}_+ : 2^2) \\
        (3^2 : 2) : (3^{1+2}_+ : 2^2) \\
        (3^3 : 2) : (3^{1+2}_+ : 2^2) } \\ \hline
\end{tabular}
\caption{Some normal subgroups of $G_{11}$}
\label{tab:normals11}
\end{center}
\end{table}
\section{The main result}
\label{sec:main}
In this section we prove our main result concerning Majorana representations of triangle-point groups.
\begin{thm}
\label{thm:main}
Suppose that $V$ is a Majorana algebra generated by three Majorana axes $a_0,a_1,a_2$ such that the dihedral algebra $\langle \langle a_0,a_1 \rangle \rangle$ is of type $2A$. Then $V$ must occur as a Majorana representation of one of $27$ groups, each of which occurs as a subgroup of the Monster.  
\end{thm}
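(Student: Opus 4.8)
The plan is to combine the structural reduction already in hand with a finite case analysis over all triangle-point groups. By the Proposition preceding this theorem, $V$ occurs as a Majorana representation $(G,T,V)$ of a triangle-point group $G=\langle a,b,c\rangle$ with $a,b,c,ab\in T$. So it suffices to (i) enumerate all triangle-point groups up to isomorphism, (ii) discard those admitting no Majorana representation, and (iii) check that exactly $27$ remain, each embedding in $\mathbb{M}$.

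For (i): by Theorem~\ref{thm:sophie} every triangle-point group is a quotient of one of $G_1,\dots,G_{11}$, and $G_i/N$ is again a triangle-point group precisely when $a,b,c,ab$ retain order $2$ in it (orders of products only drop under quotients), i.e. when $N$ avoids these four elements. For $[G_i:N]>12$ the relevant $N$ and the isomorphism types of $G_i/N$ are recorded in Tables~\ref{tab:normals} and~\ref{tab:normals11}; for $[G_i:N]\le 12$, Proposition~\ref{prop:tp12} forces $G_i/N$ to be $D_4$, $D_8$, $D_{12}$ or $2^3$. Collating these lists and removing isomorphic duplicates produces the finite master list of triangle-point groups.

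For (ii)–(iii): any triangle-point group on the list that $2A$-embeds in $\mathbb{M}$ automatically admits a Majorana representation, namely its subalgebra of the Griess algebra, and since $ab$ is then a $2A$-involution of order $2$, Theorem~\ref{thm:IPSS10} together with (M8) forces $\langle\langle\psi(a),\psi(b)\rangle\rangle$ to be of type $2A$, so this representation has the required form; these are exactly Norton's $27$ groups. It remains to show that every triangle-point group on the master list which does not $2A$-embed in $\mathbb{M}$ — the quotients of $G_9$ and $G_{11}$ not already arising from $G_1,\dots,G_8,G_{10}$, notably $G_9$, $G_{11}$, $2^4:(S_3\times S_3)$, $S_3\times S_3\times S_3$ and the $3$-local sections built from $3^{1+2}_+:2^2$ — admits no Majorana representation. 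For each such $G$ I would argue by contradiction: given $(G,T,V)$, use Lemma~\ref{lem:inclusion} and (M8) to determine the dihedral type of $\langle\langle\psi(t),\psi(s)\rangle\rangle$ for every pair $t,s\in T$ (each pair with $o(ts)=3$ yielding a $3A$ or $3C$ subalgebra and hence an auxiliary idempotent $u_\rho$, and each pair with $o(ts)\in\{4,6\}$ forcing $(ts)^2$ or $(ts)^3$ into $T$); then assemble, from Table~\ref{tab:sakuma} and the $G$-action, the multiplication table and Gram matrix of the span of the axes $\psi(t)$ together with the auxiliary vectors $u_\rho, v_\rho, w_\rho$; and finally derive a contradiction, either because the product fails to be well defined or to satisfy the fusion rules of Lemma~\ref{lem:fusion}, or because the Norton inequality (M2), equivalently positive semidefiniteness of the form, is violated on an explicit small subspace.

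The main obstacle is this last step, and within it the $3$-local sections of $G_{11}$ (those involving $3^{1+2}_+$): there one obtains many overlapping $3A$ subalgebras whose associated idempotents must be simultaneously consistent, and the resulting linear-algebra computation is large enough to be most safely organised and verified in GAP. A secondary point requiring care is that Tables~\ref{tab:normals} and~\ref{tab:normals11} classify only normal subgroups of index exceeding $12$, so the small quotients must be treated separately via Proposition~\ref{prop:tp12}, and one must check that no two entries of the master list are accidentally isomorphic in a way that would distort the final count of $27$.
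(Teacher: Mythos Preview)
Your high-level structure matches the paper exactly: reduce to triangle-point groups, enumerate them via Theorem~\ref{thm:sophie}, Tables~\ref{tab:normals}--\ref{tab:normals11} and Proposition~\ref{prop:tp12}, and then eliminate those that do not $2A$-embed. However, there is one genuine error and one substantial divergence of method.

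\textbf{The error.} Your identification of the groups to be eliminated is wrong. You assert that the bad groups are ``the quotients of $G_9$ and $G_{11}$ not already arising from $G_1,\dots,G_8,G_{10}$''. In fact the list (the paper's Table~\ref{tab:badtp}) contains ten groups, and two of them do \emph{not} come from $G_9$ or $G_{11}$: the group $S_6$ arises as a quotient of $G_8$, and $2^4:S_5$ arises as a quotient of $G_{10}$. So your case analysis, as stated, would miss both of these, and the count would come out wrong. (Incidentally, $2\times S_3\times S_3$ should also be on your explicit list.)

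\textbf{The method.} For the elimination step you propose, for each bad $G$, to assemble the full multiplication table and Gram matrix on the span of axes and auxiliary idempotents and then hunt for a failure of fusion or of~(M2). This could be made to work, but the paper's argument is far lighter and avoids this computation almost entirely. The key tool is Lemma~\ref{lem:elab}: if $K\le G$ with $K\cong 2^3$ and every nontrivial element of $K$ lies in $T$, then no Majorana representation $(G,T,V)$ exists, because one produces three $\tfrac{1}{2^2}$-eigenvectors of a single axis whose product violates the fusion rules. For nine of the ten bad groups the paper simply exhibits such a $K$ inside $G$ and checks, using (M8) and the $6A$ inclusion in Lemma~\ref{lem:inclusion}, that all seven involutions of $K$ are forced into $T$. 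Only $2^4:S_5$ needs a different idea: there one finds $K\cong 2\times D_8$ with ten involutions forced into $T$, and a short direct computation shows that (M1) fails on three explicit axes. No Gram matrices, no $3A$ idempotent bookkeeping, and no large GAP linear-algebra verification are required; the ``main obstacle'' you anticipate for the $3$-local sections of $G_{11}$ simply does not arise with this approach.
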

By considering the list of triangle-point groups of order less than 12 in Proposition \ref{prop:tp12} and the list of larger triangle-point groups in Tables \ref{tab:normals} and \ref{tab:normals11}, we see that, up to isomorphism, there are a total of 37 triangle-point groups. By comparing these with Norton's list of triangle-point subgroups of the Monster (Table 3 of \cite{Norton85}), we see that there are ten such groups that do not $2A$-embed into the Monster, details of which are given in Table \ref{tab:badtp}.

In the remainder of this section, we consider these ten groups and show that none of them can admit a Majorana representation of the form $(G,T,V)$ where $G = \langle a,b,c \rangle$ and $a,b,c,ab \in T$. In doing so, we also show that these groups cannot exist as triangle-point subgroups of the Monster (as otherwise they would have to admit such a Majorana representation) and so prove that Norton's list of triangle-point groups is complete.

\begin{table}%
\begin{center}
\begin{tabular}{|>{$}l<{$} >{$}l<{$} >{$}c<{$} |} \hline
 G & |G| & \textrm{\begin{tabular}{@{}c@{}}Quotient of \\ $G_i$ for $i$ in \end{tabular}  } \\ \hline
 2 \times S_3 \times S_3 & 72 & 9,11 \\
 S_6 & 720 & 8 \\
 (2^4:(S_3 \times S_3)) \times 2 & 1152 & 9 \\
 2^4:S_5 & 1920 & 10 \\
 S_3 \times S_3 \times S_3 & 216 & 11 \\
 2 \times (3^{1+2}_+:2^2) & 216 & 11 \\
 (3:2) : (3^{1+2}_+:2^2) & 648 & 11 \\
 (3^2:2) : (3^{1+2}_+:2^2) & 1944 & 11 \\
 (3^3:2) : (3^{1+2}_+:2^2) & 5832 & 11 \\
 (3^4:2) : (3^{1+2}_+:2^2) & 17496 & 11 \\ \hline
\end{tabular}
\caption{The triangle-point groups that do not $2A$-embed into the Monster}
\label{tab:badtp}
\end{center}
\end{table}
Throughout this section, we make use of the following result, which is Lemma 8.6.3 in \cite{Ivanov09}.
\begin{lem}
\label{lem:elab}
Suppose that there exists a group $G$ that admits a Majorana representation $(G,T,V)$. Suppose also that $G$ contains a subgroup $K$  isomorphic to the elementary abelian group of order 8. Then there must exist at least one non-identity element of $K$ that does not lie in $T$.
\end{lem}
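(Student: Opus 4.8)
The plan is to assume, for contradiction, that every non-identity element of $K$ lies in $T$, and then to extract a violation of the fusion rules. Writing $K \setminus \{1\} = \{g_1, \dots, g_7\}$, the bijectivity of $\psi$ yields seven \emph{distinct} axes $a_g := \psi(g)$. The whole argument hinges on a single observation that collapses what would otherwise be a messy case analysis.

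The decisive first step is to show that \emph{every} pair of these axes generates a dihedral algebra of type $2A$. Indeed, for distinct non-identity $g,h \in K$ the product $gh$ is again a non-identity element of $K \cong 2^3$, hence lies in $T$, and since $K$ is elementary abelian we have $g \cdot h \cdot (gh) = (gh)^2 = 1$. Applying the second part of axiom (M8) to the triple $(g,h,gh)$ then forces $\langle \langle a_g, a_h \rangle \rangle$ to be of type $2A$, with $a_{gh}$ playing the role of the third axis $a_\rho$. This is what makes the proof uniform: without it one would have to analyse the possible $2A/2B$ patterns on the seven Klein four-subgroups of $K$.

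With all pairs known to be of type $2A$, I would fix a basis $x,y,z$ of $K$ and work inside the eigenspace decomposition of $a_x$. Reading off the $2A$ data in Table \ref{tab:sakuma}, the $\tfrac{1}{2^2}$-eigenvector components are $(a_y)_{\frac{1}{2^2}}^{(a_x)} = \tfrac12(a_y - a_{xy})$ and $(a_z)_{\frac{1}{2^2}}^{(a_x)} = \tfrac12(a_z - a_{xz})$. The core computation is to expand their product via the $2A$ relation $a_p \cdot a_q = \tfrac{1}{2^3}(a_p + a_q - a_{pq})$ along the four lines $\{y,z,yz\}$, $\{y,xz,xyz\}$, $\{xy,z,xyz\}$ and $\{xy,xz,yz\}$; the cross terms telescope and leave
\[
(a_y)_{\frac{1}{2^2}}^{(a_x)} \cdot (a_z)_{\frac{1}{2^2}}^{(a_x)} = \tfrac{1}{16}\bigl(a_{xyz} - a_{yz}\bigr).
\]

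Finally I would read off the contradiction. The line $\{x, yz, xyz\}$ is also of type $2A$, so $a_{xyz} - a_{yz}$ is, up to a nonzero scalar, the eigenvector $(a_{yz})_{\frac{1}{2^2}}^{(a_x)}$ and therefore lies in $V_{\frac{1}{2^2}}^{(a_x)}$; it is nonzero since $a_{yz} \neq a_{xyz}$. But by the fusion rules in Table \ref{tab:fusion} the product of two $\tfrac{1}{2^2}$-eigenvectors must lie in $V_1^{(a_x)} \oplus V_0^{(a_x)}$, which meets $V_{\frac{1}{2^2}}^{(a_x)}$ only in $0$. Hence $a_{yz} = a_{xyz}$, contradicting the distinctness of the seven axes, so some non-identity element of $K$ cannot lie in $T$. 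I expect the only real obstacle to be bookkeeping: correctly identifying which third axis $a_{pq}$ appears on each line, verifying the telescoping, and making explicit the (easy but essential) point that $a_{xyz} - a_{yz}$ is a genuine $\tfrac{1}{2^2}$-eigenvector rather than an element of $V_1^{(a_x)} \oplus V_0^{(a_x)}$.
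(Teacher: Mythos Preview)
Your proposal is correct and follows essentially the same route as the paper: assume all seven involutions lie in $T$, use axiom (M8) to force every pair of axes into type $2A$, compute the product of two $\tfrac{1}{2^2}$-eigenvectors of a fixed axis, and observe that the result is a nonzero $\tfrac{1}{2^2}$-eigenvector, contradicting the fusion rules. Your write-up is in fact more careful than the paper's, which asserts the $2A$ property without citing (M8) and does not explicitly rule out the possibility that the resulting eigenvector is zero.
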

\begin{proof}
Suppose that $K:=\langle t_0,t_1,t_2 \rangle$ and suppose for contradiction that all non-identity elements of $K$ lie in $T$. Since any two axes $\psi(t_i)$ and $\psi(t_j)$ generate a $2A$ algebra, 
\[
\psi(t_1) - \psi(t_0t_1), \,  \psi(t_2) - \psi(t_0t_2) \textrm{ and } \psi(t_1t_2) - \psi(t_0t_1t_2)
\]
are all $\frac{1}{2^2}$-eigenvectors of $\psi(t_0)$. However,
\[
(\psi(t_1) - \psi(t_0t_1)) \cdot (\psi(t_2) - \psi(t_0t_2)) = -\frac{1}{2^2}(\psi(t_1t_2) - \psi(t_0t_1t_2)).  
\]
is also a $\frac{1}{2^2}$-eigenvector of $\psi(t_0)$. This contradicts the fusion rules and so such a representation cannot exist. 
\end{proof}
In most of the cases below, we have explicit generators for the groups in question. However, as Tables \ref{tab:normals} and \ref{tab:normals11} provide an exhaustive list of all triangle-point groups of order greater than $12$, we can also use these to determine the exact presentations of these groups. Whether we use explicit generators or the group presentation for our calculations is simply a question of clarity.
\subsection{The group $2 \times S_3 \times S_3$}
\begin{prop}
Suppose that $G = \langle a,b,c \rangle \cong 2 \times S_3 \times S_3$ is a triangle-point group and suppose that $T \subseteq G$ such that $a,b,c,ab \in T$, then there exist no Majorana representations of the form $(G,T, V)$.
\end{prop}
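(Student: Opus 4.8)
The plan is to show that, were such a representation to exist, the set $T$ would be forced to contain \emph{every} involution of $G$, which then contradicts Lemma \ref{lem:elab}.

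First I would assemble the relevant structure of $G$. Here $|G| = 72$ and $[G,G] \cong 3^2$, so the abelianisation $\overline{G} := G/[G,G]$ is elementary abelian of order $8$; since $G = \langle a,b,c \rangle$, the images $\bar a, \bar b, \bar c$ form a basis of $\overline{G}$, so $\bar a, \bar b, \bar c, \overline{ab}, \overline{ac}, \overline{bc}, \overline{abc}$ are exactly the seven non-trivial elements of $\overline{G}$. The group $G$ has exactly seven conjugacy classes of involutions, and reduction modulo $[G,G]$ sends them bijectively onto the seven non-trivial elements of $\overline{G}$: every non-trivial coset of $[G,G]$ contains an involution, and two involutions with the same image are conjugate, as the $S_3 \times S_3$ part acts transitively on the involutions of each factor. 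Finally, a Sylow $2$-subgroup $P \leq G$ has order $8$ and meets $[G,G]$ trivially, hence maps isomorphically onto $\overline{G}$; thus $P \cong 2^3$ and $P$ contains exactly one involution from each of the seven classes. Consequently $P \setminus \{1\} \subseteq T$ as soon as $T$ meets all seven classes, and then Lemma \ref{lem:elab} gives the contradiction. So it suffices to prove that $T$ meets every conjugacy class of involutions of $G$.

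Since $T$ is $G$-closed and $a,b,c,ab \in T$, it already contains the four classes with images $\bar a, \bar b, \bar c, \overline{ab}$, so I must put a suitable power of each of $ac$, $bc$, $abc$ into $T$ to reach the remaining three classes (those with images $\overline{ac}, \overline{bc}, \overline{abc}$). To do this I would show $o(ac) = 6$. Let $\pi_1, \pi_2$ be the projections of $G$ onto its non-abelian factors. Since $\overline{ac} \neq 0$ we have $ac \notin [G,G]$, so $o(ac) \in \{2,6\}$; and if $o(ac) = 2$ then in each factor both $\pi_i(a)\pi_i(b)$ and $\pi_i(a)\pi_i(c)$ have order at most $2$, which (as a product of two distinct transpositions in $S_3$ is a $3$-cycle) forces $\pi_i(b), \pi_i(c) \in \langle \pi_i(a) \rangle$ whenever $\pi_i(a) \neq 1$, contradicting $\langle \pi_i(a),\pi_i(b),\pi_i(c) \rangle = S_3$. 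Hence $\pi_1(a) = \pi_2(a) = 1$, i.e.\ $a$ is the central involution of $G$; but then $\pi_i(b), \pi_i(c)$ must be distinct transpositions for $i = 1,2$, whence $\bar b + \bar c \in \langle \bar a \rangle$ and $\bar a, \bar b, \bar c$ fail to span $\overline{G}$ --- a contradiction. So $o(ac) = 6$, and by the symmetry of the hypotheses under permutations of $\{a, b, ab\}$ (the Remark following Theorem \ref{thm:sophie}), also $o(bc) = o(abc) = 6$. With explicit generators, as in Table \ref{tab:normals}, one could instead simply compute these three orders.

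It then remains to conclude. Since $o(ac) = 6$ and $a, c \in T$, the observation following axiom (M8) (obtained from Lemma \ref{lem:inclusion}(ii) together with (M8)) gives $(ac)^3 \in T$; moreover $(ac)^3$ is an involution with image $\overline{ac}$ in $\overline{G}$, so the class with image $\overline{ac}$ lies in $T$, and similarly for $\overline{bc}$ and $\overline{abc}$. Thus $T$ meets all seven conjugacy classes of involutions of $G$, hence contains every involution of $G$, and in particular contains all non-identity elements of a Sylow $2$-subgroup $P \cong 2^3$, contradicting Lemma \ref{lem:elab}. I expect the one genuinely delicate point to be establishing $o(ac) = o(bc) = o(abc) = 6$ --- i.e.\ ruling out that any of these three products has order $2$ or $3$ in whatever triangle-point generation of $G$ is given --- together with the structural observation that, because every $2^3 \leq G$ already meets all seven involution classes, Lemma \ref{lem:elab} only bites once $T$ has been shown to contain \emph{all} involutions of $G$.
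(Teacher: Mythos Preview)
Your proof is correct and takes a cleaner, more structural route than the paper's. The paper reads off from Tables~\ref{tab:normals} and~\ref{tab:normals11} that any triangle-point presentation of $2 \times S_3 \times S_3$ has $(m,n,p)=(6,6,6)$ and satisfies exactly one of $(a\cdot b^c)^2=1$, $(ab\cdot b^c)^2=1$, $(ab\cdot a^c)^2=1$; it then treats the three cases separately, in each one exhibiting by hand a specific copy of $2^3$ (for instance $\langle a,b,(abc)^3\rangle$ in the first case) whose seven involutions are shown, via explicit word manipulations using the extra relation, to lie in $T$. You instead observe that $[G,G]\cong 3^2$, that $G/[G,G]\cong 2^3$, and that the seven involution classes of $G$ correspond bijectively to the non-trivial elements of this quotient; once you force $o(ac)=o(bc)=o(abc)=6$ by a short direct argument (rather than by appeal to the tables or to GAP), axiom~(M8) puts $(ac)^3$, $(bc)^3$, $(abc)^3$ into $T$, so $T$ contains every involution of $G$, and any Sylow $2$-subgroup---automatically elementary abelian of order $8$ and meeting every involution class---furnishes the contradiction with Lemma~\ref{lem:elab}. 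Your approach dispenses with both the case split and the word identities; the paper's approach is more computational but has the virtue of producing an explicit $2^3$ inside $T$ directly from the presentation, which is the template it reuses for the later groups where the abelianisation is less informative.
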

\begin{proof}
From Tables \ref{tab:normals} and \ref{tab:normals11}, we see that $G$ occurs either as a quotient of $G_9$, or as a quotient of $G_{11}$. In either case, we must have 
\[
G = \langle a,b,c \mid a^2, b^2, c^2, (ab)^2, (ac)^6, (bc)^6, (abc)^6, (a\cdot b^c)^{r_1}, (ab \cdot b^c)^{r_2}, (ab \cdot a^c)^{r_3} \rangle
\]
where $(r_1,r_2,r_3) \in \{(2,6,6),(6,2,6),(6,6,2)\}$.
We first suppose that $(r_1,r_2,r_3) = (2,6,6)$ and show that the group 
\[
K := \langle a, b, (abc)^3 \rangle \leq G
\]
is isomorphic to $2^3$ and that all of its non-identity elements are contained in $T$. Using the presentation of $G$ in GAP, we have checked that $o(ac) = o(bc) = o(abc) = 6$. By assumption, $a,b,ab \in T$ and, from axiom (M8), as $o(abc) = 6$, $(abc)^3 \in T$. Using the presentation of $G$, and in particular the relation $(a \cdot b^c)^2 = 1$, we can show that 
\begin{align*}
a \cdot (abc)^3 &= ((bc)^3)^{ac} \in T \\
b \cdot (abc)^3 &= ((ac)^3)^{bc} \in T \\
ab \cdot (abc)^3 &= c^{abc} \in T 
\end{align*}
and so $2^3 \cong K \subset T \cup \{e\}$. This is a contradiction with Lemma \ref{lem:elab} and so no such representation can exist. 
In the case that  $(r_1,r_2,r_3) = (6,2,6)$ or $(6,6,2)$, we take $K$ to be $\langle a,b, (ac)^3 \rangle$ or $\langle a,b,(bc)^3 \rangle$ respectively. In either case, we find that $2^3 \cong K \subset T \cup \{e\}$, again giving a contradiction.
\end{proof}
\subsection{The group $S_6$}
\begin{prop}
Suppose that $G = \langle a,b,c \rangle \cong S_6$ is a triangle-point group and suppose that $T \subseteq G$ such that $a,b,c,ab \in T$, then there exist no Majorana representations of the form $(G,T, V)$.
\end{prop}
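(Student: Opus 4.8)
The plan is to follow the same strategy that worked for $2 \times S_3 \times S_3$: exhibit a subgroup $K \cong 2^3$ inside $G$ all of whose non-identity elements lie in $T$, contradicting Lemma~\ref{lem:elab}. From Table~\ref{tab:badtp} (and Table~\ref{tab:normals}), $S_6$ arises only as a quotient of $G_8$, so I would first fix the presentation this forces on $G$, namely a quotient of $G^{(5,6,6)}$ with the added relation $R_2^4 = (ab \cdot b^c)^4 = 1$, and pin down the relevant element orders in GAP: one expects $o(ac) = 5$, $o(bc) = o(abc) = 6$, so that axiom (M8) puts $(bc)^3$ and $(abc)^3$ into $T$ (being the cubes of order-$6$ elements of $T$, via the $6A$-to-$2A$ inclusion discussed after axiom (M8)).

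Next I would search for the right $2^3$ subgroup. The natural candidates, by analogy with the previous case, are built from $a$, $b$ (or $ab$) together with one of $(bc)^3$, $(abc)^3$; commthe key computations are to verify (i) that the chosen three involutions pairwise commute and generate an elementary abelian group of order $8$ — this is a short permutation computation in the $S_6$ action given by the explicit generators in Table~\ref{tab:normals}, or alternatively a coset-enumeration check from the presentation — and (ii) that each of the four remaining non-identity products, such as $a\cdot(abc)^3$, $b\cdot(abc)^3$, $ab\cdot(abc)^3$, is $G$-conjugate to one of $a$, $b$, $c$, $ab$ and hence lies in the $G$-stable set $T$. Step (ii) is exactly the manipulation performed in the $2 \times S_3 \times S_3$ proof, where the added relation was used to rewrite each product as a conjugate of a generating involution; here one would use $R_2^4 = 1$ (and possibly $(a\cdot b^c)$, which is forced to have a specific order by the embedding) to produce analogous identities $a\cdot(abc)^3 = ((bc)^3)^{g}$ etc. for suitable $g \in G$.

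The main obstacle is verifying that the conjugating identities actually hold with the relation set that $S_6$ (rather than a larger quotient of $G_8$) imposes: unlike $2\times S_3\times S_3$, where $(a\cdot b^c)^2 = 1$ did all the work, here the only nontrivial added relation is $R_2^4 = 1$, which is weaker, so it may be that no $2^3$ built from $\{a,b,ab\}$ plus a single $2A$-element works, and one must instead locate a $2^3$ using, say, $(ac)^2$-type elements or conjugates of $c$. Concretely I would, if the first attempt fails, enumerate in GAP the elementary abelian subgroups of order $8$ of the permutation group $G\le S_{10}$ given by the Table~\ref{tab:normals} generators, and for each one test whether all seven involutions are $G$-conjugate into $\{a^G, b^G, c^G, (ab)^G\}$; since $S_6$ has $2A$-class fused appropriately, such a $K$ should exist. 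Once $K\subset T\cup\{e\}$ is confirmed, Lemma~\ref{lem:elab} immediately rules out any Majorana representation $(G,T,V)$, completing the proof.
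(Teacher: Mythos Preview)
Your overall strategy --- locate a subgroup $K \cong 2^3$ with $K \setminus \{e\} \subset T$ and invoke Lemma~\ref{lem:elab} --- is exactly what the paper does, but you are making the search for $K$ and the verification that $K \subset T \cup \{e\}$ much harder than necessary. The paper exploits a feature specific to $S_6$: choosing explicit permutation generators $a = (1,2)(3,4)(5,6)$, $b = (5,6)$, $c = (2,3)(4,5)$ (which satisfy the presentation forced by $G$ being the index-$2$ quotient of $G_8$), one observes that $a$, $b$, and $ab = (1,2)(3,4)$ lie in the three \emph{distinct} $S_6$-conjugacy classes of involutions (cycle types $2^3$, $2^1$, $2^2$). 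Since $T$ is $G$-stable and contains $a$, $b$, $ab$, it therefore contains \emph{every} involution of $G$. At that point any elementary abelian subgroup of order $8$ works --- the paper simply takes $K = \langle (1,2),(3,4),(5,6)\rangle$ --- and Lemma~\ref{lem:elab} finishes immediately. No conjugacy rewrites of the form $a\cdot(abc)^3 = ((bc)^3)^g$, no appeal to the added relation, and no GAP enumeration of $2^3$-subgroups are needed. Your plan would ultimately succeed (the fallback search you describe would certainly find such a $K$, precisely because every involution lies in $T$), but it misses this shortcut; a minor side point is that you also omit the extra relation $x^3 = 1$ (with $x = (bc)^3\cdot b^{ca}$) needed to pass from $G_8 \cong 2\times S_6$ down to $S_6$.
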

\begin{proof}
From Tables \ref{tab:normals} and \ref{tab:normals11}, we see that $G$ must occur as the group
\[
\langle a,b,c \mid a^2, b^2, c^2, (ab)^2, (ac)^6, (bc)^6, (abc)^5, (a\cdot b^c)^4, x^3 \rangle
\]
where $x = (bc)^3 \cdot b^{ca}$, so that $x^3$ is the central element of $G_8$. In this case, we use explicit generators. If we pick
\begin{align*}
a &:= (1,2)(3,4)(5,6)\\
b &:= (5,6) \\
c &:= (2,3)(4,5)
\end{align*}
then $a,b,c$ satisfy the relations above and generate the group $S_6$. Thus we may take $G = \langle a,b,c \rangle$. By definition, $T$ must contain the conjugacy classes
\[
a^G = (1,2)(3,4)^G, \, b^G = (5,6)^G, \, (ab)^G = (1,2)(3,4)(5,6)^G.
\]
In particular, as the conjugacy classes in $S_6$ are indexed by the cycle types of their elements, this must mean that \emph{all} involutions of $G$ are contained in $T$. 
Finally, $G$ contains the subgroup $\langle (1,2),(3,4),(5,6) \rangle \cong 2^3$, all of whose non-identity elements must be contained in $T$. This is in contradiction with Lemma \ref{lem:elab} and the result follows. 
\end{proof}
\subsection{The group $(2^4:(S_3 \times S_3))\times 2$}
\begin{prop}
Suppose that $G = \langle a,b,c \rangle \cong  (2^4:(S_3 \times S_3)) \times 2$ is a triangle-point group and suppose that $T \subseteq G$ such that $a,b,c,ab \in T$, then there exist no Majorana representations of the form $(G,T, V)$.
\end{prop}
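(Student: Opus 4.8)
The plan is to follow the pattern of the two preceding propositions: exhibit inside $G$ a subgroup $K$ isomorphic to the elementary abelian group $2^3$, all of whose non-identity elements are forced to lie in $T$, and then derive a contradiction from Lemma~\ref{lem:elab}. First I would pin $G$ down concretely. Since the isomorphism type $(2^4:(S_3 \times S_3))\times 2$ has order $1152 = |G_9|$, Tables~\ref{tab:normals} and~\ref{tab:normals11} (which between them list every triangle-point group of order greater than $12$) show that $G$ can only be $G_9$ itself. I may therefore take $a,b,c$ to be the explicit permutations listed for $G_9$ in Table~\ref{tab:normals}, or equivalently work with the presentation
\[
G = \langle a,b,c \mid a^2, b^2, c^2, (ab)^2, (ac)^6, (bc)^6, (abc)^6, (a\cdot b^c)^4, (ab\cdot b^c)^6, (ab\cdot a^c)^6 \rangle.
\]

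Next I would enlarge the known portion of $T$. By hypothesis $a,b,ab,c \in T$. A GAP computation shows that $ac$, $bc$ and $abc$ each have order $6$, so the corresponding dihedral algebras are of type $6A$; hence, exactly as in the observation following axiom (M8) (using Lemma~\ref{lem:inclusion}), the cubes $(ac)^3$, $(bc)^3$ and $(abc)^3$ all lie in $T$. I would then look for a non-identity element $d$ among these cubes --- or, if necessary, among the squares of order-$4$ products whose algebra is of type $4B$ --- that commutes with $a$ and $b$ and is not in $\langle a,b\rangle$, so that $K := \langle a,b,d\rangle \cong 2^3$. To finish I must check that the remaining non-identity elements $ad$, $bd$ and $(ab)d$ of $K$ also lie in $T$; following the $2 \times S_3 \times S_3$ case, I expect each of them to be a $G$-conjugate of one of $a$, $b$, $c$, $ab$, $(ac)^3$, $(bc)^3$, $(abc)^3$, all of which are already known to lie in the $G$-stable set $T$, and this can be verified directly in GAP. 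Once $2^3 \cong K \subseteq T \cup \{e\}$, Lemma~\ref{lem:elab} yields the required contradiction.

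The main obstacle is the selection of $K$: I need an elementary abelian subgroup of order $8$ whose three generators are not merely involutions of $G$ but are \emph{provably} members of $T$, which means carefully tracking which $2$-generated subalgebras are of type $2A$ (via Lemma~\ref{lem:inclusion} and the orders of the relevant products) so that axiom (M8) can legitimately be applied. There is no a priori reason that the most visible copies of $2^3$ in $G$ --- for instance those sitting inside the normal $2^4$ or involving the central $\times 2$ factor --- have all of their involutions reachable in this way, so a modest search through $G$-conjugacy is likely needed to produce a workable $K$. A secondary point to watch is the central involution generating the direct factor $\times 2$: before using it (or a product involving it) as a generator of $K$, one should determine whether it is actually forced into $T$.
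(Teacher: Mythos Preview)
Your plan is correct and matches the paper's approach exactly: identify $G$ with $G_9$, force enough involutions into $T$ via the order-$6$ argument, and then exhibit a copy of $2^3$ inside $T\cup\{e\}$ to invoke Lemma~\ref{lem:elab}. The paper's concrete choice for the third generator is $d=x^3$ with $x:=ab\cdot c^{ac}$ (not one of $(ac)^3$, $(bc)^3$, $(abc)^3$, which do not all commute with both $a$ and $b$); one then checks that $a\cdot x^3=(b\cdot c^{ac})^3$, $b\cdot x^3=((ac)^3)^{bcacac}$ and $ab\cdot x^3=c^{acabcacac}$, each of which lies in $T$ for the reasons you describe.
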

\begin{proof}
In this case, we must have $G = G_9$. Although we have explicit generators of the group, it is easier to consider $G$ as a finitely presented group with generators $a,b,c$. 
We now let $x := ab \cdot c^{ac}$ and claim that 
\[
K := \langle a,b,x^3 \rangle
\]
is isomorphic to $2^3$ and that all the non-identity elements of $K$ lie in $T$. By definition, $a,b,ab \in T$. We calculate that
\begin{align*}
a \cdot x^3 &= (b \cdot c^{ac})^3   \\
b \cdot x^3 &=((ac)^3)^{bcacac} \\
ab \cdot x^3 &=c^{acabcacac}.
\end{align*}
Either by using explicit generators, or by calculating with the group presentation in GAP, we see that $x$, $b \cdot c^{ac}$ and $ac$ are all of order 6. Moreover, as they are each the product of two elements of $T$, by axiom (M8), their cubes must all also lie in $T$. This shows that $K$ must be isomorphic to $2^3$ and that all non-identity elements of $K$ are contained in $T$. This is in contradiction with Lemma \ref{lem:elab} and the result follows. 
\end{proof}
\subsection{The group $2^4 : S_5$}
We deal with this group using slightly different techniques to the other cases. We begin by noting that from Tables \ref{tab:normals} and \ref{tab:normals11}, we see that $G$ must occur as the group
\[
\langle a,b,c \mid a^2, b^2, c^2, (ab)^2, (ac)^6, (bc)^6, (abc)^6, (a\cdot b^c)^5, (ab \cdot b^c)^5, (ab \cdot a^c)^5, (c \cdot b^{ca})^4, x^3 \rangle
\]
where $x = c^a \cdot (bc)^3$, so that $x^3$ is the central element of $G_{10}$.
If we take
\begin{align*}
a &:= (1,2)(3,4)(5,6)(7,8)(9,10)(11,12) \\
b &:= (1,3)(2,4)(5,6)(7,8)(13,14)(15,16) \\
c &:= (1,12)(3,14)(4,6)(5,16)(7,11)(9,13).
\end{align*}
then $a,b,c$ generate a group of order $1920$ and satisfy the presentation of $G$ and so we may take $G = \langle a,b,c \rangle$.  

We will show that $G$ contains a subgroup $K \cong 2 \times D_8$ and that there exist no representations of the form $(K,K\cap T, U)$. This will in turn show that there exist no  representations of the form $(G,T,V)$.
\begin{lem}
\label{lem:2D8}
Let $K := \langle (1,2), (1,3)(2,4), (5,6) \rangle \cong 2 \times D_8$ then $K$ contains eleven involutions, which we label $t_i$ for $1 \leq i \leq 11$ as below. 
\begin{center}
\begin{tabular}{| >{$} c <{$} >{$} l <{$} | >{$} c <{$} >{$} l <{$} | >{$} c <{$} >{$} l <{$} |} \hline
i & t_i & i & t_i & i & t_i \\ \hline
1 & (1,2)       & 5 & (1,3)(2,4)(5,6)   & 9  & (1,2)(3,4) \\
2 & (3,4)       & 6 & (1,4)(2,3)(5,6)   & 10 & (1,2)(3,4)(5,6) \\
3 & (1,2)(5,6)  & 7 & (1,3)(2,4)        & 11 & (5,6) \\
4 & (3,4)(5,6)  & 8 & (1,4)(2,3)        & & \\ \hline
\end{tabular}
\end{center}
If we let $S := \{t_1, \ldots, t_{10} \}$ then there exist no Majorana representations of the form $(K,S,U)$. 
\end{lem}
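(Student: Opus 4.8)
The plan is to argue by contradiction, exploiting the fact that the one involution of $K$ excluded from $S$, namely $t_{11}=(5,6)$, is central in $K\cong 2\times D_8$. First a couple of routine observations, which I would dispatch quickly: the enumeration of the eleven involutions of $K$, and the facts that $S$ is $K$-invariant and generates $K$, are direct checks in a group of order $16$. Note also that Lemma~\ref{lem:elab} cannot be applied here: writing $K=D\times\langle t_{11}\rangle$ with $D\cong D_8$ and projecting onto $D$, one sees that $D$ has no elementary abelian subgroup of rank $3$, so every elementary abelian subgroup of $K$ of order $8$ must contain $t_{11}$, and hence can never lie inside $S\cup\{1\}$. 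This is precisely why a different argument is needed.

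So suppose $(K,S,U)$ is a Majorana representation. The key step is to show that $\varphi(t_{11})=\mathrm{id}_U$. Since $t_{11}$ is central in $K$, we have $t^{t_{11}}=t$ for every $t\in S$, and therefore $\psi(t)=\psi(t^{t_{11}})=\psi(t)^{\varphi(t_{11})}$; thus $\varphi(t_{11})$ fixes every Majorana axis in $A=\psi(S)$. As $\varphi(t_{11})$ is an invertible linear map preserving the algebra product, and $A$ generates $U$, it fixes $U$ pointwise, that is, $\varphi(t_{11})=\mathrm{id}_U$.

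The contradiction now follows from Sakuma's theorem. Take $t_1=(1,2)\in S$ and $t_3=(1,2)(5,6)=t_1t_{11}\in S$. From $\varphi(t_{11})=\mathrm{id}_U$ we get $\varphi(t_3)=\varphi(t_1)\varphi(t_{11})=\varphi(t_1)$, so the dihedral group $D:=\langle\varphi(t_1),\varphi(t_3)\rangle=\langle\varphi(t_1)\rangle$ has order at most $2$. Applying Theorem~\ref{thm:IPSS10} to the pair $t_1,t_3\in S$, the subalgebra $\langle\langle\psi(t_1),\psi(t_3)\rangle\rangle$ is a dihedral Majorana algebra of some type $NX$ with $|D|=2N$; since $|D|\le 2$ this forces $N=1$, so the subalgebra is of type $1A$ and hence one-dimensional. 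But a one-dimensional Majorana algebra contains a unique Majorana axis, so $\psi(t_1)=\psi(t_3)$, contradicting the injectivity of $\psi$ together with $t_1\ne t_3$. The only real idea is the observation that centrality of $t_{11}$ forces $\varphi(t_{11})$ to act trivially on $U$; after that the dihedral classification of Theorem~\ref{thm:IPSS10} does all the work. (The same argument applies verbatim to any of the five pairs $\{u,ut_{11}\}$ of non-central involutions of $K$, so there is considerable slack.)
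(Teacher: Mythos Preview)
Your argument is correct, and it takes a genuinely different route from the paper's. The paper proceeds by an explicit numerical check of axiom (M1): it identifies the types of the dihedral subalgebras $\langle\langle a_1,a_4\rangle\rangle$ (type $2A$), $\langle\langle a_1,a_5\rangle\rangle$ and $\langle\langle a_4,a_5\rangle\rangle$ (type $4B$, since $(t_it_5)^2\in S$), and $\langle\langle a_{10},a_5\rangle\rangle$ (type $2A$), then computes from Table~\ref{tab:sakuma} that $(a_1\cdot a_4,a_5)=-\tfrac{3}{2^8}$ while $(a_1,a_4\cdot a_5)=\tfrac{1}{2^6}$, contradicting associativity of the form with the product. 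Your approach instead exploits the group structure: centrality of $t_{11}$ forces $\varphi(t_{11})=\mathrm{id}_U$, whence $\varphi(t_1)=\varphi(t_3)$ and Theorem~\ref{thm:IPSS10} together with the bijectivity of $\psi$ finishes. Your proof is shorter, uses no arithmetic from Table~\ref{tab:sakuma}, and visibly generalises to any situation in which the set $T$ omits a central involution but contains two elements differing by it; the paper's computation, by contrast, is self-contained at the level of the algebra product and illustrates the same (M1)-violation technique used in other Majorana constructions. One small remark: your appeal to Theorem~\ref{thm:IPSS10}(i) implicitly uses $|D|\ge 2$ to pin down $N=1$; you might note that the degenerate possibility $|D|=1$ is already excluded by that same clause, so no separate treatment is needed.
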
 
\begin{proof}
We suppose for contradiction that such a representation exists and show that it cannot obey axiom M1. In the following, we let $a_i := \psi(t_i)$ for $1 \leq i \leq 10$. Note that $t_1t_4 = (1,2)(3,4)(5,6) = t_{10}$ and so the algebra $\langle \langle a_1, a_4 \rangle \rangle$ is of type $2A$ and 
\[
a_1 \cdot a_4 = \frac{1}{2^3}(a_1 + a_4 - a_{10}).
\]
Now $o(t_1t_5) = o(t_4t_5) = 4$ and $(t_1t_5)^2, (t_4t_5)^2 \in S$ and so the algebras $\langle \langle a_1, a_5 \rangle \rangle $ and $\langle \langle a_4, a_5 \rangle \rangle$ are of type $4B$ and 
\[
(a_1, a_5) = (a_4, a_5) = \frac{1}{2^6}. 
\]
Finally, $t_{10}t_5 = (1,4)(2,3) = t_8$ and so $\langle \langle a_{10}, a_5 \rangle \rangle $ is of type $2A$, $(a_{10},a_5) = \frac{1}{2^3}$ and 
\[ 
(a_1 \cdot a_4, a_5) = -\frac{3}{2^8}.
\]
Similarly, we calculate that
\[
a_4 \cdot a_5 = \frac{1}{2^6}(a_4 + a_5 - a_3 - a_6 + a_9)
\]
and that
\[
(a_1, a_4 \cdot a_5) = \frac{1}{2^6} \neq (a_1 \cdot a_4, a_5)
\]
which is in contradiction with axiom M1, showing that such an algebra cannot exist. 
\end{proof}
\begin{prop}
Suppose that $G = \langle a,b,c \rangle \cong 2^4:S_5$ is a triangle-point group and suppose that $T \subseteq G$ such that $a,b,c,ab \in T$, then there exist no Majorana representations of the form $(G,T,V)$.
\end{prop}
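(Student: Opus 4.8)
The plan is to produce inside $G$ the subgroup $K \cong 2 \times D_8$ of Lemma~\ref{lem:2D8} and to show that, for any hypothetical Majorana representation $(G,T,V)$, we must have $K \cap T = S = \{t_1,\dots,t_{10}\}$; restricting the representation to $K$ then yields a Majorana representation of the form $(K,S,U)$, which is impossible by Lemma~\ref{lem:2D8}.

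First I would set $K := \langle (1,2),\, (1,3)(2,4),\, (5,6) \rangle$ and check, using the explicit generators $a,b,c$ fixed above (a routine finite computation, carried out in GAP), that $K \le G$ and $K \cong 2 \times D_8$, so that $K$ has exactly the eleven involutions $t_1,\dots,t_{11}$ listed in Lemma~\ref{lem:2D8} and no others. The central step is to show that $t_1,\dots,t_{10}$ all lie in $T$. Since $a,b,ab,c \in T$ and $T$ is $G$-stable, one computes $T$ as the closure of $a^G \cup b^G \cup (ab)^G \cup c^G$ under the operations forced by axiom (M8) and the observations following it: if $u,v \in T$ with $o(uv) = 2$ and $\langle\langle \psi(u),\psi(v)\rangle\rangle$ is of type $2A$ then $uv \in T$, and if $u,v \in T$ with $o(uv) = 6$ then $(uv)^3 \in T$, the relevant dihedral types being determined by the orders of the corresponding group elements together with Lemma~\ref{lem:inclusion} and (M8). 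Exhibiting, for each $i \in \{1,\dots,10\}$, such a presentation of $t_i$ — exactly as was done for the groups treated earlier in this section — shows $t_i \in T$.

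It then remains to exclude $t_{11} = (5,6)$ from $T$. But $\langle t_1, t_2, t_{11} \rangle = \langle (1,2), (3,4), (5,6) \rangle \cong 2^3$, and its seven non-identity elements are precisely $t_1, t_2, t_3, t_4, t_9, t_{10}, t_{11}$; if $t_{11}$ lay in $T$ then, together with the ten memberships just established, every non-identity element of this $2^3$ would lie in $T$, contradicting Lemma~\ref{lem:elab}. Hence $K \cap T = S$. Since $S$ generates $K$ (for instance $t_{11} = t_1 t_3$), the restriction of $(G,T,V)$ to $K$ — that is, the subalgebra $U := \langle\langle \psi(t) : t \in S \rangle\rangle$ together with the induced data — is a Majorana representation of the form $(K,S,U)$, which does not exist by Lemma~\ref{lem:2D8}. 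This contradiction shows that no Majorana representation $(G,T,V)$ exists.

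The step I expect to be the real obstacle is the middle one: verifying that each of $t_1,\dots,t_{10}$ is forced into $T$. This is the only place where the specific structure of $2^4 : S_5$ (rather than general Majorana-theoretic facts) is used, and it requires keeping careful track of the orders of several products of involutions in $G$ and of the induced dihedral-algebra types; although conceptually routine, it is where an arithmetic or bookkeeping slip is most likely, so the computation is best done in GAP using the explicit generators.
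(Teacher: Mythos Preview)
Your overall strategy matches the paper's exactly: locate a subgroup $K \cong 2 \times D_8$ inside $G$, show that at least ten of its eleven involutions must lie in $T$, and then invoke either Lemma~\ref{lem:elab} (if all eleven are in $T$) or Lemma~\ref{lem:2D8} (if exactly ten are). The case split at the end is logically fine.

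The concrete gap is in your choice of $K$. The permutations $(1,2)$, $(1,3)(2,4)$, $(5,6)$ are \emph{not} elements of $G$: the explicit generators $a,b,c$ fixed just before the proposition are each products of six disjoint transpositions on sixteen points, so $G \le A_{16}$, and a single transposition such as $(1,2)$ is odd and therefore lies outside $G$. The permutations appearing in Lemma~\ref{lem:2D8} define an \emph{abstract} copy of $2 \times D_8$ for bookkeeping purposes; they do not literally sit inside $G$. What is actually required --- and this is precisely the ``real obstacle'' you flagged, though not quite in the form you anticipated --- is to produce elements $x,y,z \in G$, written as words in $a,b,c$, such that $\langle x,y,z\rangle \cong 2 \times D_8$, and then to transport the labelling $t_1,\dots,t_{11}$ across an explicit isomorphism. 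The paper takes $x := a$, $y := b^{cacac}$, $z := ((ac)^3)^b$, checks that $x \mapsto (1,2)(3,4)(5,6)$, $y \mapsto (3,4)$, $z \mapsto (1,4)(2,3)$ defines an isomorphism onto the model group of Lemma~\ref{lem:2D8}, and then --- rather than verifying all ten involutions individually --- shows $x,y,z,xy,xz,(yz)^2 \in T$ and counts $K$-conjugacy classes to obtain $|K\cap T| \ge 10$. Once you replace your $K$ with one genuinely contained in $G$, the rest of your argument goes through.
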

\begin{proof}
If we let 
\[
x := a, \, y := b^{cacac} \textrm{ and } z := ((ac)^3)^b
\]
then it is easy to check that the map $f$ that sends
\begin{align*}
x & \mapsto (1,2)(3,4)(5,6) \\
y & \mapsto (3,4) \\
z & \mapsto (1,4)(2,3) 
\end{align*}
is an isomorphism from $ K := \langle x,y,z \rangle$ to $2 \times D_8$. 

By definition and by axiom (M8), we have $x,y,z \in T$. Moreover,
\begin{align*}
xy &= (ab)^{cacac} \in T,\\ 
xz &= c^{acb} \in T, \\
(yz)^2 &= (b \cdot b^{cacac})^3 \in T. 
\end{align*}
By considering the conjugacy classes of $2 \times D_8$, we see that 
\[
|x^K \cup y^K \cup z^K \cup (xy)^K \cup (xz)^K \cup ((yz)^2)^K| = 10
\]
and so, as $K$ contains $11$ involutions in total, $|K \cap T|$ is equal to $10$ or $11$. 

If $|K \cap T| = 11$ then $K$ contains an elementary abelian subgroup of order $8$ all of whose involutions are contained in $T$ and so the representation $(K, K \cap T, U)$ cannot exist. If $|K \cap T| = 10$ then $(K, K \cap T, U)$ is the representation in Lemma \ref{lem:2D8} and so equally cannot exist. Thus we may conclude that there exist no representations of the form $(G,T,V)$, as required. 
\end{proof}
\subsection{The group $S_3 \times S_3 \times S_3$}
\begin{prop}
Suppose that $G = \langle a,b,c \rangle \cong S_3 \times S_3 \times S_3$ is a triangle-point group and suppose that $T \subseteq G$ such that $a,b,c,ab \in T$, then there exist no Majorana representations of the form $(G,T, V)$.
\end{prop}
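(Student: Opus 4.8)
The plan follows the same template used above for $2 \times S_3 \times S_3$ and $(2^4 : (S_3 \times S_3)) \times 2$: I would exhibit inside $G$ an elementary abelian subgroup $K \cong 2^3$ all of whose non-identity elements lie in $T$, which is impossible by Lemma \ref{lem:elab}. First, by Tables \ref{tab:normals} and \ref{tab:normals11} the group $S_3 \times S_3 \times S_3$ occurs only as a quotient of $G_{11}$, so I may take
\[
G = \langle a,b,c \mid a^2,\, b^2,\, c^2,\, (ab)^2,\, (ac)^6,\, (bc)^6,\, (abc)^6,\ \text{(the added relations of Table \ref{tab:normals11})}\rangle,
\]
or, equivalently, fix explicit generators in the natural degree-$9$ permutation action of $S_3 \times S_3 \times S_3$. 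In either case one checks at once that $o(ac) = o(bc) = o(abc) = 6$.

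Since $a$, $b$, $ab$, $c \in T$ by hypothesis and these three products have order $6$, the observation following axiom (M8) — applied to the pairs $\{a,c\}$, $\{ab,c\}$ and $\{b,c\}$ — shows that the involutions
\[
u_1 := (ac)^3, \qquad u_2 := (bc)^3, \qquad u_3 := (abc)^3
\]
all lie in $T$. I would then set $K := \langle u_1, u_2, u_3 \rangle$. In the degree-$9$ realisation, $u_1$, $u_2$ and $u_3$ are precisely the transpositions lying in the three direct factors of $S_3 \times S_3 \times S_3$; in particular they pairwise commute and are independent over $\mathbb{F}_2$, so $K \cong 2^3$. (This is equally a short GAP computation from the presentation.)

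It remains to place the four composite non-identity elements $u_1 u_2$, $u_1 u_3$, $u_2 u_3$ and $u_1 u_2 u_3$ into $T$. For this I would compute their $G$-conjugacy classes and check that $u_1 u_2$, $u_1 u_3$, $u_2 u_3$ are $G$-conjugate, in some order, to $a$, $b$, $ab$, and that $u_1 u_2 u_3$ is $G$-conjugate to $c$ (in the degree-$9$ model one in fact has $u_1 u_2 u_3 = c$). Since $T$ is $G$-stable and contains $a$, $b$, $ab$, $c$, all four elements then lie in $T$. Consequently every non-identity element of $K$ lies in $T$, contradicting Lemma \ref{lem:elab}, and hence no Majorana representation of the form $(G, T, V)$ exists.

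Unlike the $2^4 : S_5$ case, I do not expect to need any algebra computation with axiom (M1) here; the argument is purely group-theoretic. The only genuine work is the finite verification — best done in GAP from the presentation of Table \ref{tab:normals11} — that $o(ac) = o(bc) = o(abc) = 6$, that $K$ has order $8$ rather than collapsing to a smaller $2$-group, and that the four composite involutions fall into the classes $a^G$, $b^G$, $(ab)^G$, $c^G$. The one point that genuinely needs care is the size of $K$: one must confirm that $u_1$, $u_2$, $u_3$ are independent, which is exactly what fixing the correct triangle-point generators of $S_3 \times S_3 \times S_3$ guarantees.
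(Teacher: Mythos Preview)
Your proposal is correct and follows essentially the same argument as the paper: the paper fixes explicit degree-$9$ generators $a=(1,2)(4,5)$, $b=(4,5)(7,8)$, $c=(1,3)(4,6)(7,9)$, uses axiom (M8) to put $(ac)^3=(7,9)$, $(bc)^3=(1,3)$, $(abc)^3=(4,6)$ into $T$, takes $K=\langle (1,3),(4,6),(7,9)\rangle\cong 2^3$, and observes that the remaining non-identity elements of $K$ lie in $a^G\cup b^G\cup (ab)^G\cup c^G\subseteq T$ (indeed $u_1u_2u_3=c$), yielding the contradiction with Lemma~\ref{lem:elab}. Your identification of the conjugacy classes of the composite involutions is exactly the step the paper leaves implicit under ``from the above discussion''.
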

\begin{proof}
From Tables \ref{tab:normals} and \ref{tab:normals11}, we see that $G$ must occur as the group
\[
\langle a,b,c \mid a^2, b^2, c^2, (ab)^2, (ac)^6, (bc)^6, (abc)^6, (a\cdot b^c)^6, (ab \cdot b^c)^6, (ab \cdot a^c)^6, (c^a \cdot c^{bc})^3, x^2 \rangle
\]
where $ x = acbcacb$. In this case, we use explicit generators. If we pick
\begin{align*}
a &:= (1,2)(4,5)\\
b &:= (4,5)(7,8) \\
c &:= (1,3)(4,6)(7,9)
\end{align*}
then $a,b,c$ satisfy the relations above and generate the group $S_3 \times S_3 \times S_3$. Thus we may take $G = \langle a,b,c \rangle$. By assumption, $T$ must contain the conjugacy classes $a^G,b^G,c^G,(ab)^G$. By axiom (M8), it must also contain
\begin{align*}
(ac)^3 &= (7,9) \\
(bc)^3 &= (1,3) \\
(abc)^3 &= (4,6). 
\end{align*}
We now let 
\[
K := \langle (1,3),(4,6),(7,9) \rangle \leq G.
\]
then $K$ is clearly elementary abelian of order 8 and, from the above discussion, all the non-identity elements of $K$ are contained in $T$. This is a contradiction with Lemma \ref{lem:elab} and so such a representation cannot exist. 
\end{proof}
\subsection{The group $2 \times (3^{1+2}_+:2^2)$}
\begin{prop}
Suppose that $G = \langle a,b,c \rangle \cong 2 \times (3^{1+2}_+:2^2)$ is a triangle-point group and suppose that $T \subseteq G$ such that $a,b,c,ab \in T$, then there exist no Majorana representations of the form $(G,T, V)$.
\end{prop}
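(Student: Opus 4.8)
The plan is to reproduce the argument used for the groups $2\times S_3\times S_3$ and $S_3\times S_3\times S_3$: we will exhibit a subgroup $K\le G$ isomorphic to the elementary abelian group $2^3$ all of whose seven non-identity elements lie in $T$, and then invoke Lemma~\ref{lem:elab} to reach a contradiction, so that no representation $(G,T,V)$ can exist.

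First I would pin down the presentation of $G$. From Tables~\ref{tab:normals} and~\ref{tab:normals11}, $G\cong 2\times(3^{1+2}_+:2^2)$ occurs only as a quotient of $G_{11}$ by a normal subgroup isomorphic to $3^4$, so that
\[
G = \langle a,b,c \mid a^2, b^2, c^2, (ab)^2, (ac)^6, (bc)^6, (abc)^6, (a\cdot b^c)^6, (ab\cdot b^c)^6, (ab\cdot a^c)^6, (c^a\cdot c^{bc})^3, R^2 \rangle,
\]
where $R$ is one of $a\cdot c^{bc}$, $b\cdot c^{ac}$, $ab\cdot c^{ac}$. Since permuting the elements $a$, $b$ and $ab$ induces an automorphism of $G_{11}$ that cyclically permutes these three words, it suffices to treat one case, say $R = a\cdot c^{bc}$; the remaining two then follow by relabelling, exactly as in the $2\times S_3\times S_3$ argument.

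Next, working with this finite presentation in GAP (or, equivalently, with explicit generators in a faithful permutation representation), I would verify that $ac$, $bc$ and $abc$ each have order $6$, so that by axiom~(M8) the involutions $(ac)^3$, $(bc)^3$ and $(abc)^3$ all lie in $T$; together with the hypothesis $a,b,ab\in T$ this gives six elements of $T$ inside a Sylow $2$-subgroup $2^3$ of $G$. I would then choose an involution $w$ commuting with both $a$ and $b$ and lying outside $\langle a,b\rangle$ — a natural candidate, as in the earlier cases, being a cube such as $(abc)^3$ or a conjugate of one of $(ac)^3,(bc)^3,(abc)^3$ — and set $K:=\langle a,b,w\rangle$. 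Using the extra relation $R^2=1$ to simplify words, I would rewrite each of $a\cdot w$, $b\cdot w$ and $ab\cdot w$ either as a conjugate of one of $a,b,ab,(ac)^3,(bc)^3,(abc)^3$, or as the cube of a product of two elements of $T$ whose product has order $6$; in every case axiom~(M8) forces that element into $T$. This shows $K\cong 2^3$ with $K\setminus\{e\}\subseteq T$, contradicting Lemma~\ref{lem:elab}.

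The main obstacle is the bookkeeping in this last step: one must correctly identify a commuting involution $w\notin\langle a,b\rangle$ and verify — using the added relation, which is precisely what distinguishes $G$ from $G_{11}$ — that all three of $aw$, $bw$, $abw$ are genuinely of the form handled by axiom~(M8), rather than being products of order $3$, $4$ or $5$ (for which (M8) gives no information). As in the preceding cases this is a finite computation, but the choice of $w$ and of the conjugating elements is where the argument is delicate; one also has to confirm that $K$ is actually elementary abelian of order $8$ and not a proper homomorphic image of $2^3$.
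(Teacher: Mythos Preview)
Your overall strategy is correct and is exactly the paper's: reduce to a single presentation via the $a,b,ab$ symmetry, then exhibit a $2^3$ subgroup $K=\langle a,b,w\rangle$ with $K\setminus\{e\}\subseteq T$ and invoke Lemma~\ref{lem:elab}. Where your proposal falls short is precisely at the step you flag as ``delicate'': the choice of $w$.

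Your leading candidate $w=(abc)^3$ does \emph{not} commute with $a$ in this group. Using the explicit generators the paper provides (with $y=a\cdot c^{bc}$), one computes $(abc)^3=(2,3)(4,7)(5,9)(6,8)(10,11)$, and conjugating by $a=(1,4)(2,6)(3,5)(8,9)$ gives $(1,7)(2,9)(3,8)(5,6)(10,11)\neq(abc)^3$. So $\langle a,b,(abc)^3\rangle$ is not elementary abelian here, unlike in the $2\times S_3\times S_3$ case. Your fallback ``or a conjugate of one of $(ac)^3,(bc)^3,(abc)^3$'' does not resolve this either: you would still need to find the right conjugating element and then verify membership of $aw,bw,abw$ in $T$, which is the whole content of the argument.

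The paper's actual choice is $w=(b\cdot c^{ac})^3$, the cube of a product of two $T$-elements which one checks has order $6$ (so $w\in T$ by (M8)), and then verifies
\[
a\cdot w=(ab\cdot c^{ac})^3,\qquad b\cdot w=\bigl((ac)^3\bigr)^{b(ac)^3},\qquad ab\cdot w=c^{\,acabc(ac)^2},
\]
each of which lies in $T$ by (M8) or conjugacy. Note that the relevant order-$6$ elements are $ac$, $b\cdot c^{ac}$ and $ab\cdot c^{ac}$, not $bc$ and $abc$ as you propose; the added relation $(a\cdot c^{bc})^2=1$ is what makes these particular identities hold. Your outline is right, but the proof is not complete until this specific $w$ (or an equivalent one) is found and the three products are checked.
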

\begin{proof}
From Tables \ref{tab:normals} and \ref{tab:normals11}, we see that $G$ must occur as the group
\[
\langle a,b,c \mid, a^2, b^2, c^2, (ab)^2, (ac)^6, (bc)^6, (abc)^6, (a\cdot b^c)^6, (ab \cdot b^c)^6, (ab \cdot a^c)^6, (c^a \cdot c^{bc})^3, y^2 \rangle
\]
where $y \in \{a \cdot c^{bc}, b \cdot c^{ac}, ab \cdot c^{ac}\}$. 

Note that any of the possible values for $y$ can be sent to any other by a suitable permutation of the generators $a,b,ab$. Moreover, such a permutation preserves all other relations in the presentation of these groups (to show this for the relation $(c^a \cdot c^{bc})^3 = 1$ requires some calculation, in all other cases it is clear). Thus, permutating $a$, $b$ and $ab$ induces pairwise isomorphisms between the three groups arising from the different choices of $y$. 

Without loss of generality, we can now pick $y = a \cdot c^{bc}$ and let
\[
a = (1,4)(2,6)(3,5)(8,9), b = (1,4)(2,8)(6,9)(10,11), c = (2,7)(3,4)(5,9).
\]
Then it is easy to check that $a,b,c$ satisfy the presentation of $G$ and generate a group of order $216$. Thus we may take $G = \langle a,b,c \rangle$. We then calculate that
\begin{align*}
ac &= (1,3,9,8,5,4)(2,6,7) \\ 
b \cdot c^{ac} &= (1,8,6)(2,4,9)(10,11) \\
ab \cdot c^{ac} &= (1,9,6,4,8,2)(3,5)(10,11)\\
\end{align*} 
are all of order 6. We now let
\[
K := \langle a,b, y \rangle.
\]
where $y := (b \cdot c^{ac})^3$. From axiom (M8), $(b \cdot c^{ac})^3, (ab \cdot c^{ac})^3, (ac)^3 \in T$. Thus 
\begin{align*}
a \cdot y &= (ab \cdot c^{ac})^3 \in T \\
b \cdot y &= ((ac)^3)^{b(ac)^3} \in T \\
ab \cdot y &= c^{acabc(ac)^2} \in T. 
\end{align*}
We now have $K \cong 2^3 \subseteq T \cup \{e\}$, which is a contradiction with Lemma \ref{lem:elab} and so such a representation cannot exist. 
\end{proof}
\subsection{The remaining quotients of the group $G_{11}$}
Here we consider the case where $G = \langle a,b,c \rangle \cong (3^i:2):(3^{1+2}_+:2^2)$ for $i=1,2,3,4$. The following is Proposition 3.52 in \cite{Decelle13}.
\begin{lem}
\label{lem:r4}
Let $K$ be the quotient of $G^{(6,6,6)}$ with the additional relations 
\[
(a\cdot b^c)^6 = (ab \cdot b^c)^6 = (ab \cdot a^c)^6 = (c \cdot b^{ca})^{r_4} = 1
\]
then 
\begin{itemize}
	\item if $r_4 \in \{1,5\}$ then $K \cong D_{12}$;
	\item if $r_4 \in \{3\}$ then $K \cong G^{(3,6,6)} \cong 3^{1+2}_+ : 2^2$;
	\item if $r_4 \in \{2,4\}$ then $K \cong 2 \times G^{(3,6,6)} \cong 2 \times (3^{1+2}_+ : 2^2)$. 
\end{itemize}
\end{lem}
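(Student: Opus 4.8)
The plan is to compute, for each $r_4\in\{1,2,3,4,5\}$, the order and isomorphism type of the finitely presented group $K = K(r_4)$ in the statement, and the cleanest route is coset enumeration followed by a structural identification. (Recall the convention $x^g = g^{-1}xg$, so that $R_4 = c\cdot b^{ca} = c\cdot acbca = cacbca$.) I would first run a Todd--Coxeter coset enumeration of each $K(r_4)$ over the subgroup $\langle a,b\rangle$, using GAP \cite{GAP}; these should terminate and return $|K(r_4)| = 12,\,216,\,108,\,216,\,12$ for $r_4 = 1,2,3,4,5$ respectively, with $\langle a,b\rangle\cong 2^2$ throughout. Two of the cases then reduce for free: $K(1)$ is the quotient of $K(5)$ obtained by adjoining the relation $R_4 = 1$, so the coincidence of orders forces $K(5)\cong K(1)$; likewise $K(2)$ is the quotient of $K(4)$ by $R_4^{\,2} = 1$, whence $K(4)\cong K(2)$. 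It remains to identify $K(1)$, $K(2)$ and $K(3)$.

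For $K(1)$: the only group of order $12$ generated by its involutions is $D_{12}$ (the other four groups of that order have their involutions generating a proper subgroup), so $K(1)\cong D_{12}$, and the enumeration moreover shows $R_4$ has become trivial. For $K(3)$, of order $108 = 4\cdot 27$: its Sylow $3$-subgroup $P$ is normal with $K(3)/P\cong\langle a,b\rangle\cong 2^2$, and checking that $P$ is non-abelian of exponent $3$ with $P' = Z(P)$ of order $3$ identifies $P\cong 3^{1+2}_+$; as $\langle a,b\rangle$ is a complement, $K(3)\cong 3^{1+2}_+:2^2$. To see this is $G^{(3,6,6)}$ on the nose I would verify that $(ac)^3 = 1$ holds in $K(3)$ (so that $K(3)$ is a quotient of $G^{(3,6,6)}$) and, conversely, that $R_1^6 = R_2^6 = R_3^6 = R_4^{\,3} = 1$ already hold in $G^{(3,6,6)}$; these give mutually inverse homomorphisms, and en route establish $|G^{(3,6,6)}| = 108$.

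For $K(2)$, of order $216$: the relation $R_4^{\,2} = 1$ holds there while $R_4\ne 1$ (since $|K(2)|\ne|K(1)|$), so $z := R_4$ is an involution. It is centralised by $c$, because reversing a word in the involutions $a,c$ returns its inverse and hence $R_4^{\,c} = c(cacbca)c = acbcac = R_4^{-1} = R_4$; a further check that $a$ and $b$ centralise $z$ puts $z$ in $Z(K(2))$. Then $K(2)/\langle z\rangle$ has order $108$ and is isomorphic to $3^{1+2}_+:2^2$ by the previous paragraph, and the extension splits, so $K(2)\cong 2\times(3^{1+2}_+:2^2)$; the case $r_4 = 4$ follows via $K(4)\cong K(2)$.

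The substantive part is the coset enumerations, which are rigorous but carried out by machine; as an independent check, each of these groups is a quotient of $G_{11}$, so the orders and structures can be re-confirmed directly in the explicit permutation representation of $G_{11}$ underlying Tables \ref{tab:normals} and \ref{tab:normals11}. The only genuine nuisances I anticipate are, in the $r_4 = 3$ case, separating the exponent-$3$ extraspecial group $3^{1+2}_+$ from the other groups of order $27$ ($3^{1+2}_-$, $C_9\times C_3$, $C_3^3$, $C_{27}$) and confirming that both the order-$108$ and the order-$216$ extensions split, and, in the cases $r_4\in\{1,5\}$, confirming that the collapse is \emph{exactly} to $D_{12}$ with no further degeneration; both are routine in a computer algebra system.
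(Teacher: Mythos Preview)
The paper does not prove this lemma itself; it is quoted as Proposition~3.52 of Decelle's thesis~\cite{Decelle13}, so there is no in-paper argument to compare your proposal against. Your overall strategy---coset enumeration to pin down the orders, then structural identification, together with the observations that $K(1)$ is a quotient of $K(5)$ and $K(2)$ a quotient of $K(4)$ so that equal orders force isomorphism---is a perfectly reasonable way to recover the result.

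There is, however, a genuine gap in your identification of $K(2)$. You propose to show that $z := R_4$ is central and then to argue that $K(2)/\langle z\rangle \cong 3^{1+2}_+ : 2^2$ of order $108$. But adjoining the relation $R_4 = 1$ to the presentation of $K(2)$ yields, by definition, the group $K(1)$; hence the quotient of $K(2)$ by the \emph{normal closure} of $R_4$ is $K(1)\cong D_{12}$ of order $12$, not $108$. Since by your own enumeration $|K(2)| = 216$, that normal closure has order $216/12 = 18$, and in particular $R_4$ cannot be central in $K(2)$ (otherwise its normal closure would be $\langle R_4\rangle$ of order~$2$). Your verification that $c$ centralises $R_4$ is correct, but the ``further check that $a$ and $b$ centralise $z$'' will simply fail when you run it. To repair the argument you must locate the genuine central involution of $K(2)$---equivalently, a relation whose imposition drops the order from $216$ to $108$ rather than to $12$---and verify the splitting against that element; alternatively, and more in keeping with the computational tone of the rest of your proposal, just invoke \texttt{StructureDescription} or \texttt{IdGroup} in GAP on the enumerated group directly.
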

We will also require the following result.
\begin{lem}
\label{lem:m66}
Let $G := G^{(m,6,6)}$ then 
\begin{itemize}
\item if $m = 1$, $G \cong 2^2$; 
\item if $m = 2$, $G \cong 2 \times D_{12}$; 
\item if $m = 3$, $G \cong 3^{1+2}_+:2^2$.
\end{itemize}
In particular, if $m \in \{1,2,3\}$ then $|G^{(m,6,6)}| \leq 108$. 
\end{lem}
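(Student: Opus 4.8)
The plan is to treat the three values of $m$ separately, working from the defining presentation
$G^{(m,6,6)} = \langle a,b,c \mid a^2,b^2,c^2,(ab)^2,(ac)^m,(bc)^6,(abc)^6 \rangle$.
For $m=1$ the relation $(ac)^1=1$ gives $c=a$, so the presentation degenerates to $\langle a,b \mid a^2,b^2,(ab)^2 \rangle$ together with the images of the surviving relations; I would check that $(bc)^6=(ba)^6$ and $(abc)^6=(aba)^6=b^6$ (using $aba=b$) are consequences of $(ab)^2=1$, so that $G^{(1,6,6)}\cong 2^2$. For $m=2$, the relations $a^2=(ab)^2=(ac)^2=1$ say that $a$ commutes with both $b$ and $c$, hence $a\in Z(G)$; thus $G=\langle a\rangle\cdot\langle b,c\rangle$, where $\langle b,c\rangle$ is a quotient of $D_{12}=\langle b,c\mid b^2,c^2,(bc)^6\rangle$ and $(abc)^6=a^6(bc)^6=1$ imposes nothing new, giving $|G^{(2,6,6)}|\le 24$. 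Conversely I would exhibit the surjection onto $\langle t\rangle\times D_{12}$ sending $a\mapsto t$ and $b,c$ to two reflections of $D_{12}$ meeting at angle $\pi/6$; all seven relations are visibly satisfied and the map is onto, forcing $|G^{(2,6,6)}|=24$ and $G^{(2,6,6)}\cong 2\times D_{12}$.

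For $m=3$ the fastest route is to invoke Lemma \ref{lem:r4}, whose statement already records the isomorphism $G^{(3,6,6)}\cong 3^{1+2}_+:2^2$, so no further work is needed. If instead one wants a self-contained argument, set $x:=ac$, so that $c=ax$, and the relations $(ac)^3=1$ and $c^2=1$ become $x^3=1$ and $x^a=x^{-1}$; then $N:=\langle x, x^b\rangle$ is normalised by $a$ (which inverts both generators) and by $b$ (which swaps them), so $N\trianglelefteq G$ and $G=N\rtimes\langle a,b\rangle=N\rtimes 2^2$. Rewriting $bc$ and $abc$ in terms of $x$ and $x^b$, the relations $(bc)^6=(abc)^6=1$ reduce to $(x^b x)^3 = ((x^b)^{-1}x)^3 = 1$, exhibiting $N$ as a quotient of $\langle u,v\mid u^3,v^3,(uv)^3,(u^{-1}v)^3\rangle\cong 3^{1+2}_+$. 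A Todd--Coxeter coset enumeration then confirms $|N|=27$, hence $|G^{(3,6,6)}|=108$ and $G^{(3,6,6)}\cong 3^{1+2}_+:2^2$. The final assertion $|G^{(m,6,6)}|\le 108$ is immediate from the three orders $4$, $24$, $108$.

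The main obstacle is the case $m=3$: one must rule out both that the presentation collapses to something smaller and that it defines an infinite group. Citing Lemma \ref{lem:r4} makes this free, but in a direct treatment the semidirect-product decomposition $G=N\rtimes 2^2$ only explains the shape of the answer; establishing $|N|=27$ (rather than a proper quotient of $3^{1+2}_+$) still requires an explicit coset enumeration, consistent with the GAP computations used elsewhere in the paper. The cases $m=1,2$ are elementary by comparison.
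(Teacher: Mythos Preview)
Your proof is correct and follows essentially the same approach as the paper: a case-by-case analysis with $m=1$ handled via $c=a$, $m=2$ via centrality of $a$, and $m=3$ deferred to Lemma~\ref{lem:r4}. You are in fact more careful than the paper, explicitly checking that the residual relations in cases $m=1,2$ impose nothing new and exhibiting a surjection onto $2\times D_{12}$ to pin down the order; the paper simply asserts these isomorphisms. Your optional self-contained treatment of $m=3$ via the normal subgroup $N=\langle x,x^b\rangle$ is a genuine addition beyond what the paper provides.
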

\begin{proof}
We let $G := G^{(m,6,6)}$ and deal with the cases $m = 1,2,3$ in turn.
\begin{itemize}
\item $m=1$: Here $a = c$ and so $G = \langle a,b \mid a^2, b^2, (ab)^2 \rangle \cong 2^2$.
\item $m=2$: Here $a$ commutes with $b$ and $c$ and is of order $2$ and so $G = 2 \times \langle b,c \rangle \cong 2 \times D_{12}$.
\item $m=3$: This case is given in Lemma \ref{lem:r4} above. \qedhere
\end{itemize} 
\end{proof}

\begin{prop}
Suppose that $G = \langle a,b,c \rangle \cong (3^i : 2) : (3^{1+2}_+:2^2)$ for $i = \{1,2,3,4\}$ is a triangle-point group and suppose that $T \subseteq G$ such that $a,b,c,ab \in T$, then there exist no Majorana representations of the form $(G,T, V)$.
\end{prop}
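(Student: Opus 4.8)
The plan is to assume, for a contradiction, that a Majorana representation $(G,T,V)$ exists and to exhibit inside $G$ a subgroup that is forbidden by Lemma~\ref{lem:elab}. The whole argument is driven by the element $R_4 = c \cdot b^{ca}$: since $c \in T$ and $b^{ca}$ is a $G$-conjugate of $b$, hence in $T$, the product $R_4$ is a product of two Majorana involutions, so $o(R_4) \le 6$ by Theorem~\ref{thm:IPSS10}.

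The first step is to show $o(R_4) = 6$. From Table~\ref{tab:tpgroups}, $G_{11}$ is a quotient of $G^{(6,6,6)}$ with the relations $R_1^6 = R_2^6 = R_3^6 = 1$ adjoined, and each of the four groups in question is a quotient of $G_{11}$; hence each is a quotient of $G^{(6,6,6)}$ with those three relations adjoined. If $o(R_4) = r_4 \in \{1,2,3,4,5\}$, then $G$ is also a quotient of the group $K$ of Lemma~\ref{lem:r4}, and Lemmas~\ref{lem:r4} and~\ref{lem:m66} show that $|K| \le 216$. But $|(3^i : 2):(3^{1+2}_+ : 2^2)| = 216 \cdot 3^i \ge 648$ for $i \ge 1$, so $|G| > 216$ and we reach a contradiction. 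Therefore $o(R_4) = 6$.

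Now $c$ and $b^{ca}$ lie in $T$ with $o(c \cdot b^{ca}) = 6$, so $\langle\langle \psi(c), \psi(b^{ca}) \rangle\rangle$ is a dihedral algebra of type $6A$, and by axiom (M8) together with Lemma~\ref{lem:inclusion}(ii) its cube $R_4^3$ also lies in $T$. By the same reasoning (since $o(ac) = o(bc) = o(abc) = 6$), the involutions $(ac)^3$, $(bc)^3$ and $(abc)^3$ lie in $T$ as well, alongside $a$, $b$, $ab$ and all their $G$-conjugates. The final step, carried out case by case for $i = 1,2,3,4$ using the explicit generators or the presentations read off from Tables~\ref{tab:normals} and~\ref{tab:normals11} together with GAP, is to select three of these involutions — or suitable $G$-conjugates of them such as $((bc)^3)^g$ — that generate a subgroup $K \cong 2^3$ every non-identity element of which is again a $G$-conjugate of one of $a$, $b$, $ab$, $c$, $(ac)^3$, $(bc)^3$, $(abc)^3$, $R_4^3$, and hence lies in $T$; this contradicts Lemma~\ref{lem:elab}. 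If in some case the natural candidate fails to be elementary abelian, one instead exhibits the configuration ruled out by Lemma~\ref{lem:2D8}.

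The reduction $o(R_4) = 6$ is uniform across the four groups, but the concluding step is not: identifying the right triple of involutions, pinning down the conjugating words that realise the products $a\cdot R_4^3$, $b\cdot R_4^3$, $(ab)\cdot R_4^3$ as conjugates of elements already known to lie in $T$, and checking that $R_4^3$ really does commute with the chosen generators and does not itself lie in $\langle a,b\rangle$, is the computational heart of the proof and the main obstacle.
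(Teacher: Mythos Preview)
Your overall strategy matches the paper's: bound $o(R_4)$ via Lemma~\ref{lem:r4} and an order comparison, then exhibit a $2^3$ subgroup contained in $T \cup \{e\}$ and invoke Lemma~\ref{lem:elab}. There are, however, two genuine gaps. First, you assert $o(ac)=o(bc)=o(abc)=6$ without argument; being a quotient of $G^{(6,6,6)}$ only forces these orders to divide $6$. The paper fills this by observing that if any of them were smaller, then (after permuting $a,b,ab$) $G$ would be a quotient of some $G^{(m,6,6)}$ with $m\le 3$, and Lemma~\ref{lem:m66} gives $|G^{(m,6,6)}|\le 108<648\le |G|$.

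Second, and more substantively, you declare the concluding step to be non-uniform, propose a separate treatment for each $i\in\{1,2,3,4\}$, and hedge with a possible appeal to Lemma~\ref{lem:2D8}. The paper handles all four groups in one stroke. Set $x:=b\cdot(ac)^3$, so that $R_4=x^{cac}$ and hence $o(x)=6$. The identities
\[
a\cdot x^3=\bigl((abc)^3\bigr)^{bcabcabca},\qquad b\cdot x^3=c^{\,acbcacac},\qquad ab\cdot x^3=(ac)^3
\]
are consequences of the presentation of $G_{11}$ and therefore hold in every quotient, in particular in each $(3^i:2):(3^{1+2}_+:2^2)$. They show at once that $K:=\langle a,b,x^3\rangle\cong 2^3$ and that every non-identity element of $K$ is a $G$-conjugate of one of $a$, $b$, $ab$, $c$, $(ac)^3$, $(abc)^3$, hence lies in $T$. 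No case split is needed, and Lemma~\ref{lem:2D8} plays no role here.
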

\begin{proof}
Let $m := o(ac)$, $n := o(bc)$ and $p := o(abc)$. We will show that we must have $(m,n,p) = (6,6,6)$. Suppose for contradiction that this is not true. Then $G$ must be isomorphic to a quotient of $G^{(m,6,6)}$ for $m \in \{1,2,3\}$. However, $|G| \geq 648$, in contradiction with with Lemma \ref{lem:m66} above, and so we must have $(m,n,p) = (6,6,6)$. With axiom (M8), this implies that $(ac)^3, (bc)^3, (abc)^3 \in T$.

We now consider the element $x := b \cdot (ac)^3$. As $G$ is a triangle-point group and $(ac)^3 \in T$, we must have $o(x) \leq 6$. If we were to have $o(x) < 6$ then $R_4 = x^{cac}$ would also be of order strictly less than $6$ and so $G$ would have to exist as the quotient of one of the groups in Lemma \ref{lem:r4}. Comparison of orders again shows that this cannot be the case, and so we get $o(R_4) = o(x) = 6$. 

We claim that 
\[
K := \langle a, b, x^3 \rangle 
\]
is elementary abelian of order $8$ and that all its non-identity elements lie in $T$. By assumption, $a,b,ab \in T$ and, by axiom (M8), $x^3, (abc)^3, (ac)^3 \in T$. 
\begin{align*}
a \cdot x^3 &= ((abc)^3)^{bcabcabca} \\
b \cdot x^3 &= c^{acbcacac} \\
ab \cdot x^3 &= (ac)^3.
\end{align*}
We now have $K \cong 2^3 \subseteq T \cup \{e\}$, which is a contradiction with Lemma \ref{lem:elab} and so such a representation cannot exist. 
\end{proof}


\begin{thebibliography}{9}

\bibitem[Bor92]{Borcherds92}
R.E. Borcherds.
\newblock Monstrous moonshine and monstrous {Lie} superalgebras.
\newblock {\em Invent. Math.}, 109:405--44, 1992.

\bibitem[Con84]{Conway84}
J.H. Conway.
\newblock A simple construction for the {Fischer-Griess} monster group.
\newblock {\em Invent. Math.}, 79:513--40, 1984.

\bibitem[Dec13]{Decelle13}
S.~Decelle.
\newblock {\em {Majorana} representations and the {Coxeter} groups}.
\newblock PhD thesis, Department of Mathematics, Imperial College London, 2013.

\bibitem[FLM88]{FLM88}
I.~Frenkel, J.~Lepowsky, and A.~Meurman.
\newblock Vertex operator algebras and the {Monster}.
\newblock {\em Pure and Applied Mathematics}, 134, 1988.

\bibitem[GAP4]{GAP}
The GAP~Group, \emph{GAP -- Groups, Algorithms, and Programming, 
Version 4.8.6}; 
2017,
\verb+(https://www.gap-system.org)+.

\bibitem[Iva09]{Ivanov09}
A.~A.~Ivanov.
\newblock {\em The Monster Group and {Majorana} Involutions}.
\newblock Number 176 in Cambridge Tracts in Mathematics. Cambridge University
  Press, Cambridge, 2009.

\bibitem[IPSS10]{IPSS10}
A.~A.~Ivanov and D.~V.~Pasechnik and \'{A}.~Seress and S.~Shpectorov.
\newblock {Majorana Representations of the Symmetric Group of Degree 4}.
\newblock {\em J. Algebra}, 324:2432--63, 2010.    

\bibitem[Miy96]{Miyamoto96}
M.~Miyamoto.
\newblock {Griess} algebras and conformal vectors in vertex operator algebras.
\newblock {\em J. Algebra}, 179:523--48, 1996.

\bibitem[Nor85]{Norton85}
S.P. Norton.
\newblock The uniqueness of the {Fischer-Griess} monster.
\newblock {\em Comtemp. Math.}, 45:271--285, 1985.

\bibitem[Nor96]{Norton96}
S.P. Norton.
\newblock The monster algebra: Some new formulae.
\newblock {\em Contemp. Math.}, 193:297--306, 1996.

\bibitem[Sak07]{Sakuma07}
S.~Sakuma.
\newblock 6-transposition property of $\tau$-involutions of vertex operator
  algebras.
\newblock {\em Int. Math. Res. Not.}, 2007.

\end{thebibliography}
\end{document}